\journal{Journal of Latex templates}
\newtheorem{theorem}{Theorem}
\newtheorem{prop}{Proposition}
\newtheorem{conjecture}{Conjecture}
\newtheorem{lem}{Lemma}
\newtheorem{corollary}{Corollary}
\newcommand{\newcaption}[2]{\begin{center} \captionof{figure}{#1}\label{#2}\end{center}}
\newenvironment{proof}{ {\bf Proof:}} {$\Box$}
\newenvironment{case}{ {\it Case }}
\newenvironment{subcase}{ {\it Subcase }}
\begin{document}

\begin{frontmatter}

\title{On Sylvester Colorings of Cubic Graphs}
\tnotetext[label0]{Part of the results of this paper were presented in CID 2013}

\author{Anush Hakobyan}
\ead{ashunik94@gmail.com}

\author{Vahan Mkrtchyan\corref{cor1}}
\ead{vahanmkrtchyan2002@ysu.am}
\cortext[cor1]{Corresponding author}
\address{Department of Informatics and Applied Mathematics, Yerevan State University, Yerevan, 0025, Armenia}







\begin{abstract}
\medskip
If $G$ and $H$ are two cubic graphs, then an $H$-coloring of $G$ is a proper edge-coloring $f$ with edges of $H$, such that for each vertex $x$ of $G$, there is a vertex $y$ of $H$ with
$f(\partial_G(x))=\partial_H(y)$. If $G$ admits an $H$-coloring, then we will write $H\prec G$.
The Petersen coloring conjecture of Jaeger states that for any bridgeless cubic graph $G$, one
has: $P\prec G$. The second author has recently introduced the Sylvester coloring conjecture, which
states that for any cubic graph $G$ one has: $S\prec G$. Here $S$ is the Sylvester
graph on $10$ vertices. In this paper, we prove the analogue of Sylvester coloring conjecture for
cubic pseudo-graphs. Moreover, we show that if $G$ is any connected simple cubic graph $G$ with
$G\prec P$, then $G = P$. This implies that the Petersen graph does not admit an $S_{16}$-coloring, where $S_{16}$ is the smallest connected simple cubic graph without a perfect matching. $S_{16}$ has $16$ vertices. 
Finally, we obtain $2$ results towards the Sylvester coloring conjecture. The first result states that any cubic graph $G$ has a coloring with edges of
Sylvester graph $S$ such that at least $\frac45$ of vertices of $G$ meet the conditions of Sylvester coloring conjecture. The second result states that any claw-free cubic graph graph admits an $S$-coloring. This results is an application of our result on cubic pseudo-graphs.
\end{abstract}

\begin{keyword}
Cubic graph; Petersen graph; Petersen coloring conjecture; Sylvester graph; Sylvester coloring
conjecture
\end{keyword}

\end{frontmatter}


\section{Introduction}
\label{int1}
Graphs considered in this paper are finite and undirected. They do not contain loops,
though they may contain parallel edges. We also consider pseudo-graphs, which may
contain both loops and parallel edges, and simple graphs, which contain neither loops nor parallel edges. As usual, a loop contributes to the degree of a vertex by $2$.

Within the frames of this paper, we assume that graphs, pseudo-graphs and simple graphs are considered up to isomorphisms. This implies that the equality $G=G'$ means that $G$ and $G'$ are isomorphic.

For a graph $G$, let $V(G)$ and $E(G)$ be the set of vertices and edges of $G$, respectively. Moreover, let $\partial_{G}(x)$ be the set of edges of $G$ that are incident to the vertex $x$ of $G$. A matching of $G$ is a set of edges of $G$ such that any two of them do not share a vertex. A matching of $G$ is perfect, if it contains $\frac{|V(G)|}{2}$ edges. For a positive integer $k$, a $k$-factor of $G$ is a spanning $k$-regular subgraph of $G$. Observe that the edge-set of a $1$-factor of $G$ is a perfect matching of $G$. Moreover, if $G$ is cubic and $F$ is a $1$-factor of $G$, then the set $E(G)\backslash E(F)$ is an edge-set of a $2$-factor of $G$. This $2$-factor is said to be complementary to $F$. Conversely, if $\bar{F}$ is a $2$-factor of a cubic graph $G$, then the set $E(G)\backslash E(\bar{F})$ is an edge-set of a $1$-factor of $G$ or is a perfect matching of $G$. This $1$-factor is said to be complementary to $\bar{F}$.

If $P$ is a path of a graph $G$, then the length of $P$ is the number of edges of $G$ lying on $P$. For a connected graph $G$ and its two vertices $u$ and $v$, the distance between $u$ and $v$ is the length of the shortest path connecting these vertices. The distance between edges $e$ and $f$ of $G$, denoted by $\rho_G(e,f)$, is the shortest distance among end-vertices of $e$ and $f$. Clearly, adjacent edges are at distance zero.

A subgraph $H$ of $G$ is even, if every vertex of $H$ has even degree in $H$. A block of $G$ is a maximal $2$-connected subgraph of $G$. An end-block is a block of $G$ containing at most one vertex that is a cut-vertex of $G$. If $G$ is a cubic graph containing cut-vertices, then any end-block $B$ of $G$ is adjacent to a unique bridge $e$. We will refer to $e$ as a bridge corresponding to $B$. Moreover, if $e=(u,v)$ and $u\in V(B)$, $v\notin V(B)$, then $v$ is called the root of $B$.

If $G$ is a cubic graph, and $K$ is a triangle in $G$, then one can obtain a cubic pseudo-graph by contracting $K$. We will denote this pseudo-graph by $G/K$. If $G/K$ is a graph, we will say that $K$ is contractible. Observe that if $K$ is not contractible, two vertices of $K$ are joined with two parallel edges, and the third vertex is incident to a bridge (see the end-blocks of the graph from Figure \ref{SylvGraph}). If $K$ is a contractible triangle, and $e$ is an edge of $K$, then let $f$ be the edge of $G$ that is incident to a vertex of $K$ and is not adjacent to $e$. $e$ and $f$ will be called opposite edges.

If $T$ is a set, $H$ is a subgraph of a graph $G$, and $f: E(G)\rightarrow T$, then a mapping $g: E(H)\rightarrow T$, such that $g(e)=f(e)$ for any $e\in E(H)$ is called the restriction of $f$ to $H$.

Let $G$ and $H$ be two cubic graphs, and let $f:E(G)\rightarrow E(H)$. Define:

$$ V(f)=\{x\in V(G):\exists y\in V(H) \text{ } f(\partial_{G}(x)) = \partial_{H}(y)\}.$$

An $H$-coloring of $G$ is a mapping $f:E(G)\rightarrow E(H)$, such that $V(f)=V(G)$. If $G$ admits an $H$-coloring, then we will write $H
\prec G$. It can be easily seen that if $H\prec G$ and $K\prec H$, then $K\prec G$. In other words, $\prec$ is a transitive relation defined on the set of cubic graphs.

If $H \prec G$ and $f$ is an $H$-coloring of $G$, then for any adjacent edges $e$, $e'$
of $G$, the edges $f(e)$, $f(e^{\prime})$ of $H$ are adjacent. Moreover, if the graph $H$
contains no triangle, then the converse is also true, that is, if a mapping $f : E(G) \to E(H)$
has a property that for any two adjacent edges $e$ and $e^{\prime}$ of $G$, the edges $f(e)$
and $f(e^{\prime})$ of $H$ are adjacent, then $f$ is an $H$-coloring of $G$.

\begin{center}
    \includegraphics [scale=0.5] {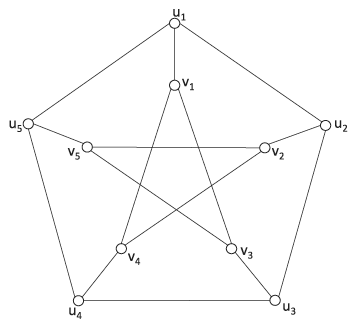}
    \newcaption{The Petersen Graph}{PetersenGraph}
\end{center}

Let $P$ be the well-known Petersen graph (Figure \ref{PetersenGraph}) and let $S$ be the graph from Figure \ref{SylvGraph}. $S$ is called the Sylvester graph \cite{Schrijver}. We would like to point out that usually the name “Sylvester graph” is used for a particular strongly regular
graph on $36$ vertices, and this graph should not be confused with $S$, which has $10$
vertices.

The Petersen coloring conjecture of Jaeger states:
\begin{conjecture} (Jaeger, 1988 \cite{Jaeger}) For each bridgeless cubic
graph $G$, one has $P \prec G$.
\end{conjecture}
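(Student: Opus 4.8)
The plan is to attack the conjecture through a minimal counterexample, after first disposing of the easy case. If $G$ is $3$-edge-colorable, then $P \prec G$ already: fix a vertex $y$ of $P$ and let its three incident edges be $a,b,c$; a proper $3$-edge-coloring of $G$ with palette $\{a,b,c\}$ sends adjacent edges of $G$ to distinct, hence adjacent, edges of $P$. Since $P$ is triangle-free, the adjacency-preservation criterion recorded in the excerpt guarantees that this map is a genuine $P$-coloring, and in fact $f(\partial_G(x))=\partial_P(y)$ for every vertex $x$. Consequently a minimal counterexample $G$, taken to be a bridgeless cubic graph of smallest order with $P \not\prec G$, must be a snark, i.e. a bridgeless cubic graph that is not $3$-edge-colorable.

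First I would pin down the structure of such a minimal $G$ by ruling out the standard reducible configurations. The key statement to establish for each configuration is a \emph{lifting} lemma: if $G'$ is obtained from $G$ by a local reduction (suppressing a small edge-cut, or contracting a short cycle), and $P \prec G'$, then the coloring can be extended or repaired to yield $P \prec G$, contradicting minimality. Carrying this out for $2$- and $3$-edge-cuts and for cycles of length $3$ and $4$ would force $G$ to be cyclically $4$-edge-connected with girth at least $5$. In parallel I would translate the target into the equivalent language of a normal $5$-edge-coloring of $G$ — a proper $5$-edge-coloring in which every edge sees exactly three or exactly five colors on itself and its neighbours — since it is known that $P \prec G$ holds precisely when such a coloring exists. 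This reformulation trades the unwieldy map into $E(P)$ for a concrete local coloring condition that is easier to build on small pieces and to propagate across the reductions above.

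The hard part, and the reason the statement is a long-standing conjecture rather than a theorem, is exactly the irreducible core that survives these reductions: a cyclically $4$-edge-connected snark of girth at least $5$. No reduction scheme is known that handles all such graphs, and there is no known local rule for assembling a normal $5$-edge-coloring on them, so the lifting lemmas above run out of configurations to act on. Closing the gap would require either a genuinely new structural decomposition of snarks that is compatible with $P$-colorings, or a route through the equivalent Berge–Fulkerson or $(5,2)$-cycle-cover formulations, each of which is itself open. I therefore expect this approach to reduce the conjecture cleanly to its irreducible core while leaving the essential difficulty untouched; the full statement is not provable by the tools of the present paper, and the pseudo-graph analogue and the $4/5$-approximation established here should be read as partial progress toward it rather than a resolution.
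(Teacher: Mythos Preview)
The statement you are asked to prove is Jaeger's Petersen coloring conjecture, and the paper does not prove it: it is listed as Conjecture~1 and is explicitly treated as open, motivating the Sylvester-coloring results that follow. There is therefore no ``paper's own proof'' to compare your attempt against.

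Your write-up recognises this. The opening paragraph is correct and standard: a proper $3$-edge-coloring of $G$ with the three edges at a vertex of $P$ is a $P$-coloring, so any counterexample is a snark. The middle paragraph sketches the usual reduction programme (small edge-cuts, short cycles, the reformulation as a normal $5$-edge-coloring), and the final paragraph honestly states that the approach stalls at cyclically $4$-edge-connected snarks of girth at least~$5$. That is an accurate assessment of the state of the art, but it is a survey of partial reductions, not a proof; the essential step --- producing a $P$-coloring for an arbitrary irreducible snark --- is missing, and you say so yourself. So your proposal is not a proof of the conjecture, and neither is anything in the paper; on this point you and the paper agree.
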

The conjecture is difficult to prove, since it can be seen that it implies the
following two classical conjectures:
\begin{conjecture} (Berge-Fulkerson, 1972 \cite{Fulkerson,Seymour}) Any bridgeless
cubic graph $G$ contains six (not necessarily distinct) perfect matchings
$F_1, \ldots , F_6$ such that any edge of $G$ belongs to exactly two of them.
\end{conjecture}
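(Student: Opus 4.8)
I can only outline a line of attack here, since the statement is the Berge--Fulkerson conjecture, one of the central \emph{open} problems on cubic graphs; accordingly my plan is to describe the most promising route and to locate the step that blocks a complete proof. First I would dispose of the easy case. If $G$ is $3$-edge-colorable, then a proper $3$-edge-coloring splits $E(G)$ into three perfect matchings $M_1,M_2,M_3$, and the six-term list $M_1,M_1,M_2,M_2,M_3,M_3$ covers every edge exactly twice, giving the conclusion at once. So from now on I would assume that $G$ is not $3$-edge-colorable, since the entire difficulty resides in that case.

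For the hard case the plan is to work inside the perfect matching polytope of $G$. Because $G$ is cubic, every odd vertex set $S$ satisfies $|\partial_G(S)| \equiv |S| \pmod 2$, so $|\partial_G(S)|$ is odd, and bridgelessness forces $|\partial_G(S)| \ge 3$; consequently the constant vector $x_e = \tfrac13$ obeys $x(\partial_G(v)) = 1$ at every vertex and $x(\partial_G(S)) = |\partial_G(S)|/3 \ge 1$ for every odd $S$. By Edmonds' perfect matching polytope theorem, $x$ then lies in the convex hull of incidence vectors of perfect matchings, so $\tfrac13\mathbf{1} = \sum_i \lambda_i \chi^{M_i}$ for suitable perfect matchings $M_i$ and weights $\lambda_i \ge 0$ with $\sum_i \lambda_i = 1$. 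The point of this reformulation is that the Berge--Fulkerson statement is precisely the assertion that this membership can always be witnessed in the rigid form $\tfrac13\mathbf{1} = \tfrac16\sum_{i=1}^{6}\chi^{M_i}$, that is, that the $\tfrac13$-vector is the average of six (not necessarily distinct) perfect matchings.

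The decisive step, and the hard part, is to turn the fractional certificate delivered by Edmonds' theorem into one with denominator dividing $6$ and exactly six terms while keeping every edge covered exactly twice. Carath\'eodory's theorem together with rationality of the polytope lets me write $x$ as a convex combination of boundedly many perfect matchings with rational weights, but I know of no mechanism that forces a common denominator $6$ with this uniform profile; this denominator-control (rounding) step is exactly where every known attempt stalls, and it is the reason the conjecture remains open. I would note in passing that the strongest available sufficient condition is a Petersen coloring: if $P \prec G$, then pulling the six perfect matchings of $P$ back along $f : E(G)\to E(P)$ produces the desired six perfect matchings of $G$, since $f^{-1}(M)\cap\partial_G(x)$ is a single edge at every vertex $x$ and each edge of $P$ lies in exactly two of those matchings. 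This, however, only replaces the problem by Jaeger's (harder) conjecture and does not establish the statement unconditionally, so I expect the rounding step above to remain the genuine obstacle.
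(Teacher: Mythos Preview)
Your assessment is correct: the statement is the Berge--Fulkerson conjecture, and the paper does not prove it. It is listed in the introduction as Conjecture~2, cited to Fulkerson and Seymour, and is invoked only to observe that it would follow from Jaeger's Petersen coloring conjecture. There is therefore no ``paper's own proof'' to compare against, and your decision to outline the known partial approaches rather than attempt a proof is the appropriate response.

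Your outline itself is accurate. The $3$-edge-colorable case is indeed trivial via doubling the three color classes; the membership of the all-$\tfrac13$ vector in the perfect matching polytope via Edmonds' description is the standard fractional relaxation; and the step you identify as the obstruction --- forcing the convex combination to have exactly six terms with equal weights $\tfrac16$ --- is precisely where the problem is open. Your remark that a Petersen coloring $f:E(G)\to E(P)$ would yield the six matchings by pulling back the six perfect matchings of $P$ is also correct and matches the paper's implicit reasoning for why Jaeger's conjecture implies Berge--Fulkerson. Nothing more can be asked here.
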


\begin{center}
    \includegraphics [scale=0.4] {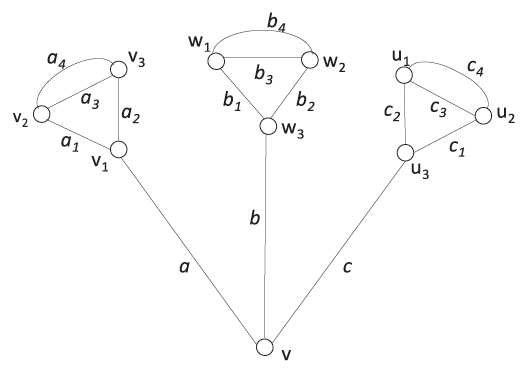}
    \newcaption{The Sylvester Graph}{SylvGraph}
\end{center}

\begin{conjecture}
((5, 2)-cycle-cover conjecture, \cite{Celmins1984,Preiss1981}) Any bridgeless
graph $G$ (not necessarily cubic) contains five even subgraphs such that any
edge of $G$ belongs to exactly
two of them.
\end{conjecture}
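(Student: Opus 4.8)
The plan is to reduce the statement to bridgeless cubic graphs, reinterpret a five-even-subgraph double cover as a labelling of the edges by the vertices of the Petersen graph, and then obtain such a cover by pulling back a fixed cover of $P$ along a Petersen colouring.

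\textbf{Reduction to the cubic case.} First I would remove all vertices of degree other than $3$. Let $G$ be bridgeless. Suppress every vertex of degree $2$, replacing a maximal path through degree-$2$ vertices by a single edge; since an even subgraph uses the two edges at a degree-$2$ vertex together or not at all, covers of the suppressed multigraph correspond bijectively to covers of $G$. Next expand each vertex $v$ of degree $d\geq 4$ into a cycle $C_v$ of length $d$, attaching one edge of $\partial_G(v)$ to each vertex of $C_v$; the result $G'$ is cubic and stays bridgeless. If $\{C_1',\dots,C_5'\}$ is a $(5,2)$-cover of $G'$, then restricting each $C_j'$ to the edges of $G$ gives five subgraphs of $G$. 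Every original edge is still covered exactly twice, and at the expanded vertex $v$ the number of edges of $\partial_G(v)$ lying in $C_j'$ is even, because $C_j'$ has even degree at every vertex of $C_v$ while the cycle edges cancel in the degree sum. Hence the restriction is a $(5,2)$-cover of $G$, and it suffices to treat bridgeless cubic graphs.

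\textbf{Reformulation for cubic graphs.} For cubic $G$, five even subgraphs covering each edge exactly twice are the same as a map $\lambda$ assigning to each edge the $2$-element subset of $\{1,\dots,5\}$ that records which two of the subgraphs contain it. At a vertex $x$ with edges $e_1,e_2,e_3$ evenness forces each index to appear an even number of times among $\lambda(e_1),\lambda(e_2),\lambda(e_3)$; as the total number of incidences is $6$, exactly three indices occur (each twice), so the three labels are $\{a,b\},\{b,c\},\{c,a\}$ for some triple. Identifying the $2$-subsets of $\{1,\dots,5\}$ with the vertices of $P$ through the Kneser description $P=K(5,2)$, a $(5,2)$-cover of $G$ is precisely an edge-labelling by $V(P)$ whose three labels around every vertex form such a triangle.

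\textbf{The main route via Petersen colouring.} As observed in the excerpt, a Petersen colouring yields a $(5,2)$-cover; concretely, one checks that $P$ itself has a $(5,2)$-cover, five even subgraphs $D_1,\dots,D_5$ of $P$ with each of its $15$ edges in exactly two of them. Given a Petersen colouring $f:E(G)\to E(P)$, set $C_j=f^{-1}(D_j)$. For each $j$ and each vertex $x$ of $G$, $f$ maps $\partial_G(x)$ onto $\partial_P(y)$ for some $y\in V(P)$, so the number of edges of $\partial_G(x)$ in $C_j$ equals the number of edges of $\partial_P(y)$ in $D_j$, which is even; thus every $C_j$ is an even subgraph. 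Since $f(e)$ lies in exactly two $D_j$, each edge $e$ lies in exactly two $C_j$, so $\{C_1,\dots,C_5\}$ is the desired cover.

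\textbf{The obstacle.} The reductions and the pull-back are routine; the genuine difficulty is the input to the last step. Producing a Petersen colouring of an arbitrary bridgeless cubic graph is exactly Jaeger's Petersen colouring conjecture, which is open, and the implication used here is only the known direction (Petersen colouring $\Rightarrow$ $(5,2)$-cover). Any attempt to build the five even subgraphs directly — through nowhere-zero flows, or by exhibiting five elements of the cycle space over $GF(2)$ that sum to zero while covering each edge exactly twice — meets the same wall, so the hard part is the unconditional construction on cubic graphs rather than the bookkeeping around it.
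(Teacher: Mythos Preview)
The statement you were asked to prove is labelled in the paper as a \emph{conjecture}, not a theorem: it is the $(5,2)$-cycle-cover conjecture of Seymour and Szekeres, cited only to illustrate that Jaeger's Petersen colouring conjecture is at least as strong. The paper offers no proof of it and does not claim one; it simply records the implication ``Petersen colouring $\Rightarrow$ $(5,2)$-cover'' as motivation. So there is no argument in the paper against which to compare your attempt.

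Your write-up is an accurate account of exactly that implication. The reduction to cubic graphs, the Kneser description of a $(5,2)$-cover as a $V(P)$-labelling, and the pull-back of a fixed $(5,2)$-cover of $P$ along a $P$-colouring are all correct and standard. Your final paragraph is the honest conclusion: the argument is conditional on Jaeger's conjecture, which is open, and no unconditional proof is known. In short, you have not proved the statement, but neither has the paper---and you have correctly identified both the known route and the obstruction.
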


\begin{center}
    \includegraphics [scale=0.5] {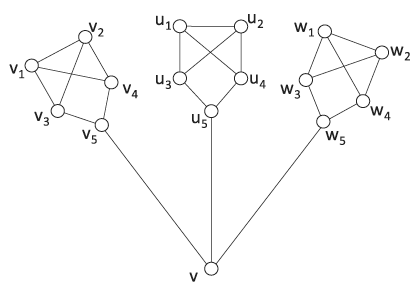}
    \newcaption{The Graph $S_{16}$}{SylvSimple}
\end{center}

Related with the Jaeger conjecture, the following conjecture has been introduced in \cite{PetersenRemark}:
\begin{conjecture}(V. V. Mkrtchyan, 2012 \cite{PetersenRemark})
\label{mainConjecture}
For each cubic graph $G$, one has $S \prec G$. 
\end{conjecture}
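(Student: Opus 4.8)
To establish the pseudo-graph analogue of Conjecture~\ref{mainConjecture}, the plan is to argue by induction on $|V(G)|$: take a connected counterexample with the fewest vertices and strip off reducible configurations until only simple cubic graphs and a few small pseudo-graphs remain. A disconnected $G$ is coloured componentwise, so $G$ may be assumed connected, and the very small pseudo-graphs are coloured by hand (for instance the two-vertex pseudo-graph with three parallel edges, by sending its three edges to $\partial_{S}(y)$ for any vertex $y$ of $S$). The first reduction eliminates parallel edges. Suppose $e_{1},e_{2}$ join $u$ and $v$, do not form a two-vertex component, and do not lie inside a non-contractible triangle; let $e_{u}=(u,u')$ and $e_{v}=(v,v')$ be the remaining edges at $u$ and $v$, so that $u',v'\notin\{u,v\}$, and assume $u'\neq v'$ (the case $u'=v'$ produces a non-contractible-triangle end-block, handled below). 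Delete $u,v,e_{1},e_{2}$ and add a new edge $e^{*}=(u',v')$; by induction the smaller pseudo-graph has an $S$-colouring $g$. Put $f(e_{u})=f(e_{v})=g(e^{*})$, pick an endpoint $y$ of the edge $g(e^{*})$ of $S$, and colour $e_{1},e_{2}$ by the two edges of $\partial_{S}(y)$ other than $g(e^{*})$; then $f$ is proper and $u,v,u',v'\in V(f)$, so $S\prec G$, a contradiction.

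The second reduction handles a contractible triangle $K$: form $G/K$, colour it by induction to obtain $g$, and extend $g$ across $K$. This step rests on a local property of $S$ that must be checked directly on its ten vertices, to the effect that, whatever colours $g$ assigns to the three edges meeting the contracted vertex, the three edges of $K$ (equivalently, the edges opposite the external edges of $K$) can be coloured so that all three vertices of $K$ lie in $V(f)$. Non-contractible triangles and loop-vertices are not reduced this way: as observed after the definition of $G/K$, each such configuration is an end-block joined to the rest of $G$ by a bridge, and is therefore absorbed into the bridge reduction.

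The bridge reduction is the decisive one and it forces a strengthening of the induction. Given a bridge $e$, split $G$ along $e$ into two $1$-rooted cubic pseudo-graphs, that is, two cubic pseudo-graphs each having a single vertex of degree one, the pendant edge of each being a copy of $e$. One proves simultaneously, by the same induction, the rooted statement: every $1$-rooted cubic pseudo-graph admits, for \emph{every} edge $s$ of $S$, a proper $S$-edge-colouring in which every degree-three vertex lies in $V(f)$ and the pendant edge receives $s$. Granting this, colour both pieces so that their pendant edges get the same $s$, and glue along $e$; every original vertex then lies in $V(f)$. The rooted statement is proved by the same reductions, parallel-edge suppression and triangle contraction carrying over almost verbatim, the new base cases being a pendant edge attached to a two-vertex pseudo-graph or to a non-contractible triangle; the latter is precisely where one uses that $S$ itself contains a non-contractible-triangle end-block (two parallel edges plus a bridge, as in the introduction), by mapping the offending end-block of $G$ onto it.

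The step I expect to be the main obstacle is the verification, finite but delicate, of the two local-flexibility facts about the specific ten-vertex graph $S$: that every contractible triangle can be coloured over $S$, and that a pendant edge can be attached to $S$ with any prescribed colour. These must hold simultaneously, and together they are what singles out $S$, rather than some smaller cubic pseudo-graph, as the right target; isolating them is the technical core of the argument. One must also check that the reductions are confluent, that is, that every connected cubic pseudo-graph which is not one of the base cases really contains a bridge, a parallel edge, a loop, or a contractible or non-contractible triangle to which some reduction applies, so that the bridge/triangle-end-block interplay flagged after the definition of $G/K$ creates no new obstruction; fix a consistent convention for loop-vertices; and make sure the rooted lemma and the main statement genuinely feed each other within a single induction.
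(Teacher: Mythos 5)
The statement you are addressing is Conjecture \ref{mainConjecture}, which is an \emph{open} conjecture: the paper does not prove it, and its strongest result in this direction is Theorem \ref{SylvesterColResult}, which only guarantees $|V(f)|\geq \frac{4}{5}\cdot|V(G)|$. (The statement that is actually proved in full is the $S_4$-coloring of cubic pseudo-graphs, whose target is the four-vertex pseudo-graph $S_4$, not $S$; your opening sentence conflates the two, and that theorem is proved by induction on the number of bridges using Petersen's theorem and $2$-factors of end-blocks, not by your reductions.) Your proposal does not close the conjecture, for two concrete reasons.

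First, your list of reducible configurations --- loops, parallel edges, triangles, bridges --- does not cover cubic graphs. A simple, triangle-free, bridgeless cubic graph (the Petersen graph, or any snark) contains none of them, so your induction never touches exactly the class of graphs on which the conjecture is hard; the ``confluence'' you defer to a final check is the missing theorem itself, not a routine verification. Second, your rooted lemma is false as stated. Take the rooted piece consisting of a non-contractible-triangle end-block (vertices $x,y,z$ with $x$ and $y$ joined by two parallel edges) together with its pendant edge at $z$. For $x$ and $y$ to lie in $V(f)$, the two parallel edges must receive two edges of $S$ sharing \emph{both} endpoints, i.e.\ one of the three parallel pairs of $S$; this forces the images of $(z,x)$ and $(z,y)$ to be the two remaining edges of that end-block of $S$, and then forces the pendant edge to receive a bridge of $S$ --- consistent with part (e) of Lemma \ref{Lemma 1} --- so the pendant edge cannot be prescribed ``every edge $s$ of $S$''. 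Even the weakened rooted lemma (pendant edge receives a prescribed bridge of $S$) would still have to be proved for pieces whose interior is bridgeless, which is again the hard, open case. By contrast, the local check you flag for contractible triangles is trivial: coloring each triangle edge with the color of the external edge at the opposite vertex always works, exactly as in Subcase 1.1 of the proof of Theorem \ref{SylvesterColResult}.
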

In direct analogy with the Jaeger conjecture, we call Conjecture
\ref{mainConjecture} the Sylvester coloring conjecture.

In this paper, we consider the analogues of this conjecture for simple cubic graphs and cubic pseudo-graphs. Let $S_{16}$ be the simple graph from Figure
\ref{SylvSimple}, and let $S_4$ be the pseudo-graph from Figure \ref{SylvPseudo}.


\begin{center}
    \includegraphics [scale=0.5] {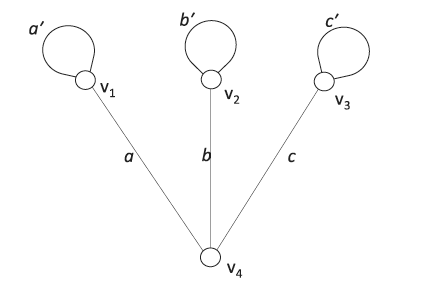}
    \newcaption{The Pseudo-Graph $S_4$}{SylvPseudo}
\end{center}

In this paper, we show that not all simple cubic graphs admit an $S_{16}$-coloring. On the positive side, we prove that all cubic pseudo-graphs have an $S_4$-coloring. We complete the paper by proving $2$ results towards Conjecture \ref{mainConjecture}. The first one states that for any cubic graph $G$ there is a mapping $f: E(G)\rightarrow E(S)$, such that \mbox{$|V(f)|\geq \frac{4}{5}\cdot |V(G)|$}. The second one states that any claw-free cubic graph admits an $S$-coloring. The latter result is derived as a consequence of the $S_4$-colorability of cubic pseudo-graphs.

Terms and concepts that we do not define in the paper can be found in \cite{Harary,West}.

\section{Some Auxiliary Statements}
\label{aux2}

In this section, we present some auxiliary statements that will be used in Section \ref{main3}. 

\begin{theorem}\label{PetersenTheorem}(Petersen, 1891 \cite{Lovasz})Let $G$ be a cubic graph containing at most two bridges. Then $G$ has a $1$-factor.
\end{theorem}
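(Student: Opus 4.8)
The plan is to deduce this from Tutte's $1$-factor theorem, which asserts that a graph $G$ has a perfect matching if and only if $o(G-S)\le |S|$ for every $S\subseteq V(G)$, where $o(G-S)$ denotes the number of odd components of $G-S$. So I would argue by contradiction: assume $G$ has at most two bridges but no $1$-factor, and fix a set $S$ witnessing the failure of Tutte's condition, that is, $o(G-S)>|S|$.

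The first step is a parity observation. Since $G$ is cubic, $3|V(G)|=2|E(G)|$ forces $|V(G)|$ to be even, and the same holds for each component; in particular the case $S=\emptyset$ produces no violation, since then every component is even. For a nonempty $S$, the vertices outside $S$ number $|V(G)|-|S|$, and since even components contribute an even number of vertices while odd ones contribute an odd number, $o(G-S)\equiv |V(G)|-|S|\equiv |S|\pmod 2$. Hence $o(G-S)$ and $|S|$ share the same parity, and the strict inequality $o(G-S)>|S|$ upgrades to $o(G-S)\ge |S|+2$.

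The heart of the argument is an edge count between $S$ and the odd components $C_1,\dots,C_k$ of $G-S$, where $k=o(G-S)$. For each $C_i$, cubicity gives that $\sum_{v\in V(C_i)}\deg_G(v)=3|V(C_i)|$ is odd, while edges internal to $C_i$ contribute an even amount to this sum; therefore the number of edges joining $C_i$ to $S$ is odd, hence at least $1$, and if it equals $1$ that single edge is a bridge of $G$. This is exactly where the hypothesis enters: since $G$ has at most two bridges, at most two of the $C_i$ can send only one edge to $S$, and every other $C_i$ sends at least three. Consequently the number of edges between $S$ and $\bigcup_i C_i$ is at least $3(k-2)+2=3k-4$.

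Finally, I would bound the same quantity from above: every edge counted is incident to a vertex of $S$, and $S$ has exactly $3|S|$ incident edge-ends, so the count is at most $3|S|$. Combining the two estimates yields $3k-4\le 3|S|$, i.e. $k\le |S|+1$, contradicting $k\ge |S|+2$. The main obstacle is getting the constant right in the edge count: the whole theorem hinges on the fact that allowing exactly two ``cheap'' single-edge (hence bridge) components still leaves the lower bound $3k-4$ just above the ceiling $3|S|$, so that ``at most two bridges'' is the precise threshold at which the argument survives.
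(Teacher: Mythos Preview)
The paper does not supply its own proof of this statement: Theorem~\ref{PetersenTheorem} is quoted as a classical result of Petersen (with a reference to \cite{Lovasz}) and is used as a black box in the proof of Lemma~\ref{EndBlocks} and elsewhere. So there is no ``paper's proof'' to compare against.

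That said, your argument is correct and is essentially the standard modern derivation of Petersen's theorem from Tutte's $1$-factor theorem. The steps are all in order: the parity upgrade $o(G-S)\ge |S|+2$, the observation that each odd component sends an odd number of edges to $S$ (hence at least one, and if exactly one then that edge is a bridge), the conclusion that at most two odd components are ``cheap'', and the resulting lower bound $3k-4$ versus the upper bound $3|S|$. One small point you handled implicitly but might state explicitly: since $S\neq\emptyset$ (you disposed of $S=\emptyset$ separately) and $k\ge |S|+2$, you have $k\ge 3$, so the estimate $3(k-2)+2$ is not vacuous; in any case the inequality $j+3(k-j)\ge 3k-4$ for $j\le 2$ covers all values of $k$.
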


\begin{lem}\label{EndBlocks} Let $G$ be a bridgeless graph with $d(v)\in \{2,3\}$ for any $v\in V(G)$. Assume that all vertices of $G$ are of degree $3$ except one. Then $G$ has a $2$-factor.
\end{lem}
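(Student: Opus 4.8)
The plan is to reduce the existence of a $2$-factor to the existence of a suitable $1$-factor in an auxiliary cubic graph, and then invoke Theorem~\ref{PetersenTheorem}. Write $v$ for the unique vertex of degree $2$. The first step is the elementary remark that $G$ has a $2$-factor if and only if $G-v$ has a perfect matching: if $F$ is a $2$-factor, then both edges incident to $v$ lie in $F$, so $E(G)\setminus E(F)$ contains no edge at $v$ and meets every degree-$3$ vertex in exactly one edge, i.e.\ it is a perfect matching of $G-v$; conversely, if $M$ is a perfect matching of $G-v$, then in $E(G)\setminus M$ the vertex $v$ has degree $2$ and every other vertex has degree $3-1=2$, so this is a $2$-factor. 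I would also record the parity observation that $\sum_{x\in V(G)}d(x)=3(|V(G)|-1)+2$ is even, which forces $|V(G)|$ to be odd; this is the fact that drives the argument.

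Next I would construct an auxiliary graph $\widehat G$ from two disjoint copies $G_1,G_2$ of $G$, with $v_1,v_2$ the vertices corresponding to $v$, by adding a single new edge $b=v_1v_2$. Since $v_1$ and $v_2$ had degree $2$ while every other vertex had degree $3$, the graph $\widehat G$ is cubic (possibly with parallel edges, which is harmless). Adding an edge never creates a bridge, and each $G_i$ is bridgeless, so $b$ is the only bridge of $\widehat G$. Hence Theorem~\ref{PetersenTheorem} applies and produces a $1$-factor $\widehat F$ of $\widehat G$.

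The key point is that $b\in\widehat F$. Indeed, $\widehat F\cap E(G_1)$ is a matching of $G_1$, hence covers an even number of vertices of $G_1$; since $|V(G_1)|=|V(G)|$ is odd, it must leave some vertex of $G_1$ uncovered. But the only edge of $\widehat G$ not lying in $G_1$ that touches a vertex of $G_1$ is $b$, which touches $v_1$; so every vertex of $V(G_1)\setminus\{v_1\}$ is covered by $\widehat F\cap E(G_1)$, which forces $v_1$ to be the uncovered vertex and $b\in\widehat F$. Consequently $\widehat F\cap E(G_1)$ is a perfect matching of $G_1-v_1$. Identifying $G_1$ with $G$ and $v_1$ with $v$ gives a perfect matching $M$ of $G-v$, and by the first paragraph $E(G)\setminus M$ is the desired $2$-factor of $G$.

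I do not expect a genuine obstacle; the only things demanding care are the routine verifications that $\widehat G$ is cubic with at most two bridges (so Theorem~\ref{PetersenTheorem} truly applies) and the parity argument pinning $b$ inside every $1$-factor of $\widehat G$. If one prefers not to double the graph, an alternative is to suppress $v$ (replacing the path through it by a single edge $e$), obtaining a bridgeless cubic graph $G'$, and then to argue that $G'$ has a perfect matching avoiding $e$; but this rests on the same parity-type reasoning, so the two-copies construction seems the cleanest route.
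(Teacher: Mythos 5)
Your proof is correct and follows essentially the same route as the paper: doubling the graph, joining the two degree-$2$ vertices by an edge to get a cubic graph with a single bridge, applying Petersen's theorem, and observing that the bridge must lie in the resulting $1$-factor. Your parity argument for why the bridge lies in every perfect matching is in fact more explicit than the paper's one-line assertion, but the content is identical.
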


\begin{proof} Take two copies $G_1$ and $G_2$ of $G$, and consider a graph $H$ obtained from them by joining degree $2$ vertices by an edge $e$. Observe that $H$ is a cubic graph containing only one bridge, which is the edge $e$. By Theorem \ref{PetersenTheorem}, $H$ contains a $1$-factor $F$. Since $e$ is a bridge of $H$, we have $e\in F$. Consider the complementary $2$-factor $\bar{F}$ of $F$. Clearly, the edges of the set $E(\bar{F})\cap E(G_1)$ form a $2$-factor of $G_1$, which completes the proof of the lemma.
\end{proof}

\begin{prop}\label{property}
Let $G$ be a simple cubic graph that has no a perfect matching and ${\left\vert V(G)
\right\vert \leq 16}$. Then ${G = S_{16}}$.
\end{prop}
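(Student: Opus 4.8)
\emph{Proof strategy.} The plan is to apply Tutte's $1$-factor theorem. Since $G$ has no perfect matching, there is a set $S\subseteq V(G)$ with $o(G-S)>|S|$, where $o(\cdot)$ denotes the number of components of odd order; because $G$ is cubic, each odd component of $G-S$ sends an odd (hence positive) number of edges into $S$, and a parity count (using that $|V(G)|$ is even) upgrades this to $o(G-S)\geq|S|+2$. I may also assume $G$ is connected: otherwise some connected component of $G$ is a simple cubic graph on at most $16$ vertices with no perfect matching, and once the proposition is known for connected graphs this component is already $S_{16}$, which forces $G=S_{16}$. For connected $G$ the choice $S=\emptyset$ is impossible, so $|S|\geq1$.

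The heart of the argument is to rule out $|S|\geq2$ by counting the edges joining $S$ to the odd components of $G-S$. I would use four facts: (i) a one-vertex component must send $3$ edges to $3$ distinct vertices of $S$, so such components can occur only when $|S|\geq3$; (ii) a three-vertex component cannot contain a vertex whose degree inside $G-S$ is $2$, since a simple graph on three vertices has at most $\binom{3}{2}=3$ edges, hence it must send at least $3$ edges to $S$; (iii) every remaining odd component has at least $5$ vertices and sends at least one edge to $S$; (iv) the number of edges leaving $S$ is at most $3|S|$. Feeding $o(G-S)\geq|S|+2$ into these inequalities shows that $|S|\geq2$ forces $|V(G)|\geq18$, contradicting $|V(G)|\leq16$; one checks the bound case by case, e.g.\ $|S|=2$ actually yields $|V(G)|\geq20$.

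Hence $|S|=1$, say $S=\{v\}$. Since $\deg v=3$ and every component of $G-v$ meets $v$, the graph $G-v$ has exactly three components $C_1,C_2,C_3$, each of odd order, with $v$ sending exactly one (bridge) edge into each; consequently inside $C_i$ the neighbour $u_i$ of $v$ has degree $2$ and every other vertex has degree $3$. A connected simple graph with this degree pattern has odd order at least $5$ (smaller orders are excluded by parity or by the edge bound $\binom{n}{2}$), so $|V(G)|=1+|C_1|+|C_2|+|C_3|\geq16$, with equality precisely when $|C_i|=5$ for all $i$. Finally, a connected simple graph on $5$ vertices with degree sequence $(3,3,3,3,2)$ is unique up to isomorphism --- its complement has degree sequence $(1,1,1,1,2)$, namely a path on three vertices together with a disjoint edge --- so $G$ is determined: the vertex $v$ joined by bridges to three copies of this graph at their degree-$2$ vertices. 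That graph is exactly the Sylvester graph $S_{16}$ of Figure \ref{SylvSimple}, which finishes the proof.

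The step I expect to be the main obstacle is the elimination of the cases $|S|\geq2$: the edge count is tight enough to work only if one is careful about which odd components may be small and exactly how many edges each kind is forced to send into $S$ (a crude bound there still permits $|V(G)|\leq16$, e.g.\ in the subcase $|S|=2$ with one triangle component). A lesser point needing care is checking that the graph reconstructed in the $|S|=1$ equality case is genuinely isomorphic to the $S_{16}$ of Figure \ref{SylvSimple}, together with the reduction to the connected case.
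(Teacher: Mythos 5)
Your proposal is correct. Note that the paper itself states Proposition \ref{property} without any proof (it is invoked as a known fact about the smallest simple cubic graph with no perfect matching), so there is no argument of the authors to compare against; your Tutte--Berge counting argument is a valid self-contained proof. I checked the two delicate points you flagged: the general estimate for a Tutte set of size $k\geq 2$ works out to $|V(G)|\geq 2k+14\geq 18$ (using that order-$1$ and order-$3$ odd components each absorb at least $3$ of the at most $3k$ edges leaving the set, so their number $m$ satisfies $m\leq k-1$ and hence at least $k+2-m\geq 3$ components have order $\geq 5$), and in the $k=1$ equality case the three components are forced to be the unique connected simple graph on $5$ vertices with degree sequence $(3,3,3,3,2)$, which is exactly the end-block of $S_{16}$ as labelled in the paper's proof of Theorem \ref{S16thmUnique} (edges $(v_1,v_2),(v_1,v_3),(v_1,v_4),(v_2,v_3),(v_2,v_4),(v_3,v_5),(v_4,v_5)$). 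So the reconstruction does yield $S_{16}$.
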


\begin{lem} \label{Lemma 1}
Suppose that $G$ and $H$ are cubic graphs with $H \prec G$, and let $f$ be an $H$-coloring of
$G$. Then:
\begin{enumerate}[(a)]
\item If $M$ is any matching of $H$, then $f^{-1}(M)$ is a matching of $G$;

\item $\chi^{\prime}(G) \leq \chi^{\prime}(H)$, where $\chi^{\prime}(G)$ is the chromatic index
of $G$;

\item If $M$ is a perfect matching of $H$, then $f^{-1}(M)$ is a perfect matching of $G$;

\item For every even subgraph $C$ of $H$, $f^{-1}(C)$ is an even subgraph of $G$;

\item For every bridge $e$ of $G$, the edge $f(e)$ is a bridge of $H$.

\end{enumerate}
\end{lem}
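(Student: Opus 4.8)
The plan is to treat the five parts in order, leaning throughout on the single structural fact packed into the definition of an $H$-coloring: for each $x \in V(G)$ there is $y \in V(H)$ with $f(\partial_G(x)) = \partial_H(y)$, and since both sets have size $3$, the restriction of $f$ to $\partial_G(x)$ is in fact a \emph{bijection} onto $\partial_H(y)$. Everything else is bookkeeping around this observation.

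For (a) I would argue by contradiction: if $e, e' \in f^{-1}(M)$ shared a vertex $x$, take the corresponding $y$; by the bijection above, $f(e)$ and $f(e')$ are two \emph{distinct} edges of $\partial_H(y)$, and both lie in $M$, contradicting that $M$ is a matching. Part (b) then follows at once: write $E(H)$ as a union of $\chi'(H)$ matchings $M_1, \dots, M_{\chi'(H)}$; by (a) the sets $f^{-1}(M_i)$ are matchings, and they cover $E(G)$, so $\chi'(G) \leq \chi'(H)$. For (c), (a) already gives that $f^{-1}(M)$ is a matching, so it remains to see it saturates every $x \in V(G)$: with $y$ as above, a perfect $M$ meets $\partial_H(y)$ in some edge, and the unique preimage of that edge inside $\partial_G(x)$ lies in $f^{-1}(M)$ and covers $x$.

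For (d), given an even subgraph $C$ of $H$, I would let $G'$ be the subgraph of $G$ with edge set $f^{-1}(E(C))$. For $x \in V(G)$ with associated $y$, the bijection between $\partial_G(x)$ and $\partial_H(y)$ yields $\deg_{G'}(x) = \deg_C(y)$, which is even; hence $G'$ is an even subgraph of $G$. Finally, for (e) I would use the standard characterization that an edge $e$ is a bridge of a graph precisely when $e$ belongs to no even subgraph of it (if $e$ did lie in an even subgraph $C$, both endpoints of $e$ would have positive, hence $\geq 2$, degree in $C$, so $e$ would lie on a cycle). Now if $f(e)$ were not a bridge of $H$, it would lie on a cycle $C$ of $H$; by (d), $f^{-1}(C)$ is then an even subgraph of $G$ containing $e$, contradicting that $e$ is a bridge of $G$.

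Parts (a)–(d) are essentially mechanical once the ``bijection on the link of a vertex'' remark is isolated. The only step that is not purely routine is the reduction of ``bridge'' to ``avoids every even subgraph'' used in (e), which is precisely what allows (e) to be deduced from (d) rather than from a direct connectivity argument about $G-e$ versus $H-f(e)$; I expect that translation to be the main (though mild) obstacle, and I would state it explicitly as a one-line observation before applying it.
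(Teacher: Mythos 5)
Your proof is correct. The paper itself states Lemma~\ref{Lemma 1} without proof (it is quoted as an auxiliary result from \cite{PetersenRemark}), so there is nothing to compare against line by line; your argument is the standard one, and the key observation that $f$ restricted to $\partial_G(x)$ is a bijection onto $\partial_H(y)$ (both sets having exactly three elements in loopless cubic graphs) is exactly the right engine for all five parts, including the reduction of (e) to (d) via the characterization of bridges as edges lying in no even subgraph.
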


\begin{prop}\label{Non3edgeProp} Let $G$ be a connected non-$3$-edge-colorable simple cubic graph such that $|V(G)|\leq 10$. Then $G=P$ or $G=S'$ (Figure \ref{theorem3f3}).
\end{prop}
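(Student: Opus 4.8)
The idea is to classify, up to isomorphism, all connected simple cubic graphs on at most ten vertices and to read off which of them fail to be $3$-edge-colorable. By Vizing's theorem $\chi'(G)\in\{3,4\}$ for cubic $G$, so being non-$3$-edge-colorable means $\chi'(G)=4$; and since $G$ is cubic, $|V(G)|\in\{4,6,8,10\}$. A key preliminary observation I would record is that a connected simple cubic graph with a bridge has at least ten vertices: if $e=uv$ is a bridge, then each component of $G-e$ is a connected simple graph with exactly one vertex of degree $2$ and all others of degree $3$, and a degree count shows such a component has an odd number $k$ of vertices with $k\neq 1,3$, hence $k\ge 5$.

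For $|V(G)|\le 8$ this forces $G$ to be bridgeless. A bridgeless cubic pseudo-graph has no loops, and contracting a triangle of a bridgeless cubic graph yields a smaller bridgeless cubic pseudo-graph (possibly with parallel edges) whose proper $3$-edge-colorings lift back to the original; performing such contractions, together with the standard handling of parallel edges, reduces $G$ to a connected simple cubic graph of girth at least $4$ on at most $8$ vertices. There are only a handful of these (for instance $K_{3,3}$ and the $3$-cube), and each is $3$-edge-colorable by a short direct check. Hence no graph on at most $8$ vertices is a counterexample.

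Now let $|V(G)|=10$. If $G$ has a bridge $e=uv$, the degree count forces $G-e$ to consist of two components of order exactly $5$, and up to isomorphism there is a unique connected simple graph on $5$ vertices with one vertex of degree $2$ and four of degree $3$ (namely $K_4$ minus an edge, together with a fifth vertex joined to the two vertices of degree $2$). Consequently $G$ is uniquely determined: it is the graph $S'$ of Figure~\ref{theorem3f3}. That $S'$ is not $3$-edge-colorable follows from a parity argument: in any proper $3$-edge-coloring the colour class containing $e$ would, restricted to the $5$-vertex side containing $u$, be a matching covering all five of its vertices, which is impossible. If instead $G$ is bridgeless, I would reduce once more: a triangle of $G$ contracts to a bridgeless cubic pseudo-graph on $8$ vertices, which is $3$-edge-colorable by the previous paragraph and lifts, so $G$ has girth at least $4$; ruling out girth exactly $4$ leaves, since $|V(G)|=10$, a cubic graph of girth at least $5$, which is the $(3,5)$-cage, i.e.\ the Petersen graph $P$.

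The main obstacle is the girth-$4$ subcase on ten vertices. One natural route is a ``square reduction'' — deleting a $4$-cycle and reconnecting its four pendant edges in one of the two admissible ways to obtain a smaller bridgeless cubic pseudo-graph, then lifting a $3$-edge-coloring — but this has several degenerate configurations (the four outer vertices need not be distinct, and one must check bridgelessness and that the colouring genuinely lifts), so it may in fact be cleaner to replace it by a finite verification over the connected cubic graphs on ten vertices that contain a $4$-cycle but no triangle and no bridge, checking directly that each is $3$-edge-colorable. The remaining ingredients — the bridge argument, the uniqueness of the $5$-vertex ``half'', the non-colorability of $S'$, and the uniqueness of the $5$-cage — are classical and routine.
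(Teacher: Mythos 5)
The paper offers no proof of Proposition~\ref{Non3edgeProp}: it is stated as a known fact, in effect a finite verification over the few connected simple cubic graphs on at most ten vertices. So there is no proof of the authors' to compare yours against. Your skeleton is sound and is the natural way to organize that verification: the degree count showing that a bridge forces at least five vertices on each side (hence at least ten in all, and exactly five per side when $|V(G)|=10$), the uniqueness of the $5$-vertex graph with degree sequence $(2,3,3,3,3)$ (clear, e.g., by passing to complements), hence the uniqueness of the bridged example $S'$, and the identification of the bridgeless girth-$\geq 5$ case with the Petersen graph as the unique $(3,5)$-cage are all correct, as is the triangle-contraction reduction (contraction preserves bridgelessness, and a proper $3$-edge-colouring of $G/K$ lifts to $G$ by giving each triangle edge the colour of its opposite edge).

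Two things need repair. A small one: your parity argument for $S'$ cites the wrong colour class. The class containing the bridge $e$, restricted to the five-vertex side of $u$, covers only the four vertices other than $u$, which is entirely possible; the contradiction comes from either class \emph{not} containing $e$, which would be a perfect matching of $G$ avoiding $e$ and hence a perfect matching of an odd component of $G-e$. (Equivalently: no cubic graph with a bridge is $3$-edge-colourable, since each side of the bridge is odd.) The more serious issue is that the case carrying essentially all of the content of the proposition --- bridgeless, triangle-free, girth exactly $4$, on $8$ or $10$ vertices --- is not actually done: you defer it either to a square reduction whose degenerate configurations you acknowledge but do not handle, or to a finite check you do not perform. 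Until one of these is carried out, what you have is an outline, not a proof. The finite check is short and is probably the right choice here: on $8$ vertices the triangle-free connected cubic graphs are the cube and the Wagner graph, and on $10$ vertices there is only a handful with girth $4$; each admits an evident proper $3$-edge-colouring (for instance via a Hamiltonian cycle, or via a perfect matching whose complementary $2$-factor consists of even cycles). With that case filled in and the parity argument corrected, the proof is complete.
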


We will also need some results that were obtained in \cite{Fouquet,Steffen1,Steffen2}. Let $G$ be a graph of maximum degree at most $3$, and assume that $c$ is a proper coloring of some edges of $G$ with colors $1$, $2$ and $3$. The edges of $G$ that have not received colors in $c$ are called uncolored edges. Now, assume that $c$ is chosen so that the number of uncolored edges of $G$ is minimized. It is known that for such a choice of $c$, uncolored edges must form a matching. 

\begin{center}
    \includegraphics [scale=0.6] {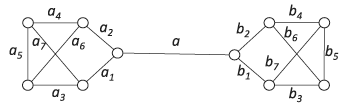}
    \newcaption{The graph $S'$ on $10$ vertices}{theorem3f3}
\end{center}

Take an arbitrary uncolored edge $e=(u,v)$. As $c$ is chosen so that the number of uncolored is smallest, we have that $e$ is incident to edges that have colors $1$, $2$, and $3$. Since $G$ is of maximum degree at most $3$, we have that there are colors $\alpha, \beta \in \{1, 2, 3\}$, such that no edge incident to $u$ and $v$ is colored with $\beta$ and $\alpha$, respectively. Consider a maximal $\alpha-\beta$-alternating path $P_e$ starting from $u$. As it shown in \cite{Fouquet,Steffen1,Steffen2}, this path must terminate in $v$, and hence it is of even length. This means that $P_e$ together with the edge $e$ forms an odd cycle $C_e$. $C_e$ is called the cycle corresponding to the uncolored edge $e$. It is known that

\begin{lem}\label{VertexDisjoint}(\cite{Fouquet,Steffen1,Steffen2})
If $G$ is a graph of maximum degree at most $3$, and $e$, $e'$ ($e\neq e'$) are $2$ uncolored edges of $G$, then $V(C_{e})\cap V(C_{e'})=\emptyset$.
\end{lem}

Finally, we will need some results on claw-free cubic graphs. Recall that a graph $G$ is claw-free, if it does not contain $4$ vertices, such that the subgraph of $G$ induced on these vertices is isomorphic to $K_{1,3}$. In \cite{ChudSeyClawFreeChar}, arbitrary claw-free graphs are characterized. In \cite{sang-il_oum:2011}, Oum has characterized simple, claw-free bridgeless cubic graphs. Following the approach of Oum, below we will characterize claw-free (not necessarily bridgeless) cubic graphs. 

We need some definitions. A $2$-cycle is a cycle of length $2$ ($2$ parallel edges). In a claw-free cubic graph $G$ any vertex belongs to $1$, $2$, or $3$ triangles or $1$ or $3$ $2$-cycles. If a vertex $v$ belongs to $3$ triangles of $G$, then the component of $G$ containing $v$ is isomorphic to $K_4$ (Figure \ref{K4K23}). An induced subgraph of $G$ that is isomorphic to $K_4-e$ is called a diamond \cite{sang-il_oum:2011}. It can be easily checked that in a claw-free cubic graph no $2$ diamonds intersect. If $v$ belongs to $3$ $2$-cycles of $G$, then the component of $G$ containing $v$ is isomorphic to $K_{2}^{3}$ (Figure \ref{K4K23}).

 \begin{center}
    \includegraphics [scale=0.5] {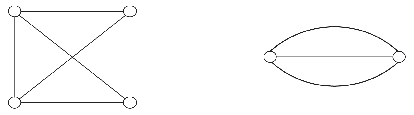}
    \newcaption{The graphs $K_4$ and $K_{2}^{3}$.}{K4K23}
\end{center}

A string of diamonds or $2$-cycles of $G$ is a maximal sequence $F_{1},...,F_{k}$ of diamonds or $2$-cycles, in which $F_{i}$ has a vertex adjacent to a vertex of $F_{i+1}$, $1\leq i \leq k-1$ (Figure \ref{stringk4k23}).  A string of diamonds or $2$-cycles has exactly $2$ vertices of degree $2$, which are called the head and the tail of the string. A string $J$ of a claw-free cubic graph $G$ is trivial, if $J$ is comprised of $1$ $2$-cycle, and $G$ contains a vertex that is adjacent to both the head and tail of $J$. Replacing an edge $e = (u,v)$ with a string of diamonds or $2$-cycles with the head $x$ and the tail $y$ is to remove $e$ and add edges $(u,x)$ and $(v,y)$.

 \begin{center}
    \includegraphics [scale=0.5] {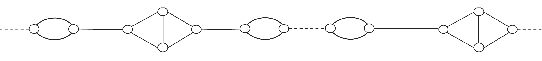}
    \newcaption{A string of diamonds or $2$-cycles.}{stringk4k23}
\end{center}

If $G$ is a connected claw-free cubic graph such that each vertex lies in a diamond or a $2$-cycle, then $G$ is called a ring of diamonds or $2$-cycles. It can be easily checked that each vertex of a ring of diamonds or $2$-cycles lies in exactly $2$ diamonds or $2$-cycles. As in \cite{sang-il_oum:2011}, we require that a ring of diamonds or $2$-cycles contains at least $2$ diamonds or $2$-cycles.

We are ready to present the characterization of claw-free cubic graphs.

\begin{prop}\label{prop:ClawfreeCharac} $G$ is a connected claw-free cubic graph, if and only if
\begin{enumerate}
    \item [(1)] $G$ is isomorphic to $K_4$ or $K_{2}^{3}$, or
    
    \item [(2)] $G$ is a ring of diamonds or $2$-cycles, or
    
    \item [(3)] there is a cubic pseudo-graph $H$, such that $G$ can be obtained from $H$ by replacing some edges of $H$ with strings of diamonds or $2$-cycles, and by replacing any vertex of $H$ with a triangle.
\end{enumerate}
\end{prop}

\begin{proof}
We omit the proof of the proposition, since it can be done in the same way as the proof of Proposition $1$ of \cite{sang-il_oum:2011}. 
\end{proof}

\section{The Main Results}
\label{main3}

In this section, we obtain the main results of the paper. Our first theorem shows that the statement analogous to Sylvester coloring conjecture holds for cubic pseudo-graphs. Actually, we will show that all cubic pseudo-graphs $G$ admit an $S_4$-coloring. We use the labels of edges of $S_4$ from Figure \ref{SylvPseudo}. 

In the theorem, we need the concept of a list. A list is a collection of elements, where the order of elements is not important and elements may appear more than once. $2$ lists are equal, if they are comprised of the same elements, and the frequency of appearance of each element in the lists is the same. In contrast with sets, that we denote by $\{...\}$, lists will be denoted by $\langle ... \rangle$. According to our definitions, we have $\langle 1,1,2 \rangle=\langle 1,2,1 \rangle$ and $\langle 1,2,2 \rangle\neq \langle 2,1,1 \rangle$.


\begin{theorem}\label{thm:PseudoCubicS4}
Let ${G}$ be a cubic pseudo-graph, and let $\partial(S_4)$ be the following set of lists:\[\partial(S_4)=\{\langle a,b,c \rangle, \langle a,a',a' \rangle, \langle b,b',b' \rangle, \langle c,c',c' \rangle\}.\] Then, there is a mapping $f:E(G)\rightarrow E(S_4)$, such that
\begin{enumerate}
    \item [(a)] if a vertex $v$ of $G$ is incident to a loop $e'$ and a bridge $e$, then the list $\langle f(e),f(e'),f(e') \rangle$ is one of the $3$ lists of $\partial(S_4)\backslash \{\langle a,b,c \rangle\}$,
    
    \item [(b)] if a vertex $v$ is incident to $3$ edges $e$, $e'$ and $e''$, then the list $\langle f(e),f(e'),f(e'') \rangle$ is one of the $4$ lists of $\partial(S_4)$.
\end{enumerate}
\end{theorem}

\begin{proof} It is clear that we can prove the theorem only for connected cubic pseudo-graphs $G$. First of all, we prove the theorem when $G$ is a connected graph. We proceed by induction on the number of bridges of $G$. 

If there are at most $2$ bridges in $G$, then due to Theorem \ref{PetersenTheorem}, the graph
${G}$ has a $1$-factor. Color the edges of the $1$-factor by ${a}$, and the edges of the complementary
$2$-factor by ${a^\prime}$. It is not hard to see that the described coloring meets the condition (b) of the theorem.

Now assume that the statement is true for graphs with at most ${k}$ bridges,
and we prove it for those with ${k+1}$ bridges ($k+1\geq 3$). We will consider the following cases:

\medskip

\begin{case} 1: For any two end-blocks $B$ and $B'$ of $G$, the bridges $e$ and $e'$ corresponding to them, are adjacent.

\medskip

It is not hard to see that in this case, $G$ consists of three end-blocks $B_1$, $B_2$, $B_3$, such that the bridges $e_1$, $e_2$, $e_3$ corresponding to them are incident to the same cut-vertex $v$. We obtain an $S_4$-coloring of $G$ as follows: let $\bar{F}_1$, $\bar{F}_2$, $\bar{F}_3$ be $2$-factors in $B_1$, $B_2$, $B_3$, respectively (see Lemma \ref{EndBlocks}). Color the edges of $\bar{F}_1$ with $a'$ and the edges of $(E(B_1)\backslash \bar{F}_1)\cup \{e_1\}$ with $a$, the edges of $\bar{F}_2$ with $b'$ and the edges of $(E(B_2)\backslash \bar{F}_2)\cup \{e_2\}$ with $b$, the edges of $\bar{F}_3$ with $c'$ and the edges of $(E(B_3)\backslash \bar{F}_3)\cup \{e_3\}$ with $c$. It is not hard to see that the described coloring meets the condition (b) of the theorem.
\end{case}

\medskip

\begin{case} 2: There are two end-blocks $B$ and $B'$ of $G$, such that the bridges $e$ and $e'$ corresponding to them, are not adjacent. 

 Assume that $e=(u,u')$, $e'=(v,v')$ and $u'\in V(B)$, $v'\in V(B')$. Consider a cubic graph $H$ obtained from $G$ as follows:
 
 \begin{equation*}
 H=[G\backslash (V(B_1)\cup V(B_2))]\cup \{(u,v)\}.
 \end{equation*}If initially $G$ had an edge $(u, v)$, then in $H$
we will just get two parallel edges between $u$ and $v$.
 
Observe that $H$ is a cubic graph containing at most $k$ bridges. By induction hypothesis, $H$ admits an 
${S_4}$-coloring $g$ satisfying condition (b) of the theorem. Now we obtain an $S_4$-coloring for the graph $G$ using the coloring $g$ of $H$. For that purpose we consider $2$ cases.

\medskip

\begin{subcase} 2.1: $g((u,v))\in \{a, b, c\}$.

\medskip

Without loss of generality, we can assume that $g((u,v))=a$. Other cases can be come up in a similar way. By Lemma \ref{EndBlocks}, $B$ and $B'$ contain $2$-factors $\bar{F}$ and $\bar{F}'$, respectively. Consider the restriction of $g$ to $G$. We extend it to an $S_4$-coloring of $G$ as follows: color the edges of $\bar{F}\cup \bar{F}'$ with $a'$ and the edges of $(E(B)\backslash \bar{F})\cup (E(B')\backslash \bar{F}')\cup \{e, e'\}$ with $a$. It is not hard to see that the described coloring meets the condition (b) of the theorem.

\end{subcase}

\medskip

\begin{subcase} 2.2: $g((u,v))\in \{a', b', c'\}$.

\medskip

Without loss of generality, we can assume that $g((u,v))=a'$. Other cases can be come up in a similar way. It is not hard to see that the edges of $H$ colored with $a'$ (the edges of set $f^{-1}(a')$) form vertex disjoint cycles in $H$. Consider the cycle $C_{uv}$ of $G$ containing the edge $(u,v)$, and let $P_{uv}=C_{uv}-(u,v)$ (Figure \ref{theorem1f2}). By Lemma \ref{EndBlocks}, $B$ and $B'$ contain $2$-factors $\bar{F}$ and $\bar{F}'$, respectively. 

\begin{center}
\includegraphics [scale=0.6] {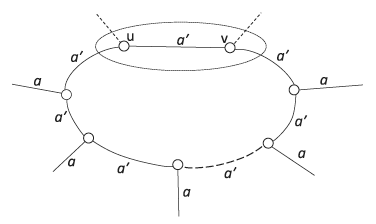}
\newcaption{The cycle $C_{uv}$ and the path $P_{uv}$ in the graph $H$}{theorem1f2}
\end{center}

Define edges $d$ and $d'$ of $S_4$ as follows: if $P_{uv}$ is of odd length, then $d=b$, $d'=b'$, and $d=c$, $d'=c'$, otherwise. Consider the restriction of $g$ to $G$. We extend it to an $S_4$-coloring of $G$ as follows: color the edges of $\bar{F}$ with $b'$ and the edges of $E(B)\cup \{e\}$ with $b$, re-color the edges of $P_{uv}$ by coloring them with colors $b$ and $c$ alternatively beginning from $c$, color the edges of $\bar{F}'$ with $d'$ and the edges of $E(B')\cup \{e'\}$ with $d$ (Figure \ref{theorem1f3}). It is not hard to see that the described coloring meets the condition (b) of the theorem.

\begin{center}
\includegraphics [scale=0.6] {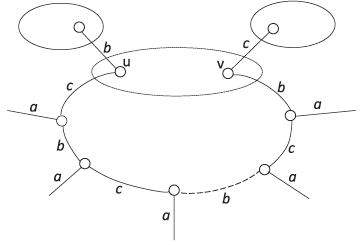}
\newcaption{The path $P_{uv}$ in the graph $G$}{theorem1f3}
\end{center}
\end{subcase}
\end{case}

 Finally, we consider the case when $G$ contains loops. Let $H$ be a graph obtained from $G$ by replacing all vertices of $G$ incident to loops, by triangles (Figure \ref{changeLoop}).

\begin{center}
    \includegraphics [scale=0.5] {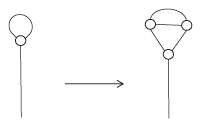}
    \newcaption{Modification of loops of $G$}{changeLoop}
\end{center}

Now ${H}$ has an ${S_4}$-coloring $f$ satisfying the condition (b) of the theorem. We claim that any bridge of $H$ has a color from $\{a, b, c\}$. 

For the sake of contradiction, assume that there is a bridge $e$ of $H$ such that $f(e)=a'$ (the cases $f(e)=b'$ and $f(e)=c'$ can be come up in a similar way). It is not hard to see that the edges of $H$ colored with $a'$ (the edges of set $f^{-1}(a')$) form vertex disjoint cycles in $H$. Consider the cycle $C$ containing $e$. Since $e$ is a bridge, $C$ intersects this cut in $1$ edge, which is a contradiction to the fact that cycles intersect edge-cuts in even number of edges.

Thus all bridges of $H$ have colors from $\{a, b, c\}$. In particular, any bridge $e$ of $H$, which was adjacent to a loop of $G$, has a color $f(e)=d$, where $d\in \{a, b, c\}$. Color the loop $e'$ of $G$ that is adjacent to $e$ with $d'$, where $d'=a'$, $d'=b'$ or $d'=c'$, if $d=a$, $d=b$ or $d=c$, respectively. It is not hard to see that the described coloring meets the conditions (a) and (b) of the theorem.

The proof of the theorem is completed.
\end{proof}

\bigskip

Conjecture \ref{mainConjecture} states that all cubic graphs admit an $S$-coloring. On the other hand, in the previous theorem we have shown that all cubic pseudo-graphs have an $S_4$-coloring. One may wonder whether there is a statement analogous to these in the class of simple cubic graphs? More precisely, is there a connected simple cubic graph $H$ such that all simple cubic graphs admit an $H$-coloring? A natural candidate for $H$ is the graph $S_{16}$. Next we prove a theorem that justifies our choice of $S_{16}$. On an intuitive level it states that the only way of coloring the graph $S_{16}$ with some connected simple cubic graph $H$ is to take $H=S_{16}$. This result is analogues to the following theorem proved in \cite{PetersenRemark}. 

\begin{theorem} (V. V. Mkrtchyan, \cite{PetersenRemark})
Let ${G}$ be a connected cubic graph with  ${G \prec S}$, then ${G = S}$.
\end{theorem}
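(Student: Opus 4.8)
\medskip
The plan is to fix a $G$-coloring $f\colon E(S)\to E(G)$ and to recover the structure of $S$ inside $G$. Recall (Figure~\ref{SylvGraph}) that $S$ consists of a central vertex $v_0$ joined by three bridges $e_1,e_2,e_3$ to three end-blocks $T_1,T_2,T_3$; each $T_i$ has vertices $v_i,w_i,u_i$ with $v_i$ incident to $e_i$ and $w_i,u_i$ joined by a doubled edge, so $\partial_S(v_i)=\{e_i,v_iw_i,v_iu_i\}$. Since $e_1,e_2,e_3$ are pairwise adjacent bridges of $S$, Lemma~\ref{Lemma 1}(e) gives that $f(e_1),f(e_2),f(e_3)$ are bridges of $G$; in particular $G$ has a bridge, so $G$ is not the two-vertex multigraph with three parallel edges, and therefore the map $\phi\colon V(S)\to V(G)$ determined by $f(\partial_S(x))=\partial_G(\phi(x))$ is well defined. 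Put $y:=\phi(v_0)$ and $b_i:=f(e_i)$; then $\partial_G(y)=\{b_1,b_2,b_3\}$, so all three edges at $y$ are bridges and the three neighbors of $y$ are distinct. It follows that $G-y$ has exactly three components $C_1,C_2,C_3$, with $C_i$ joined to $y$ by the bridge $b_i=yz_i$, $z_i\in V(C_i)$.

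The crux is the claim that $\phi(v_i)=z_i$ (and not $\phi(v_i)=y$) for every $i$; informally, the coloring must not ``leak'' across $y$. Suppose $\phi(v_1)=y$. Then $f$ maps $\partial_S(v_1)$ bijectively onto $\{b_1,b_2,b_3\}$, so the two edges $v_1w_1,v_1u_1$ are colored, say, $b_2$ and $b_3$. Propagating $f$ to $w_1$ and $u_1$, and using that (i) the two parallel edges between $w_1$ and $u_1$ must get distinct colors (Lemma~\ref{Lemma 1}(a), as a single edge is a matching), (ii) $G$ is loopless, and (iii) $b_1,b_2,b_3$ are the only edges of $G$ joining the pairwise-disjoint sets $V(C_1),V(C_2),V(C_3)$, a short case analysis shows that $f(\partial_S(w_1))$ or $f(\partial_S(u_1))$ would have to contain fewer than three distinct edges, or else edges lying in two different $C_j$'s — neither being possible, since these sets equal $\partial_G(\phi(w_1))$ and $\partial_G(\phi(u_1))$. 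Hence $\phi(v_i)=z_i$ for all $i$.

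Now fix $i$. Since $f(v_iw_i)$ joins $\phi(v_i)=z_i$ to $\phi(w_i)$ and $G$ is loopless, $\phi(w_i)\neq z_i$; likewise $\phi(u_i)\neq z_i$; and $\phi(w_i)\neq\phi(u_i)$, for otherwise the doubled edge of $T_i$ maps to a loop. Thus $\phi(w_i),\phi(u_i)\in V(C_i)\setminus\{z_i\}$, so every edge of $G$ incident to $\phi(w_i)$ or $\phi(u_i)$ lies in $C_i$. Reading off the $f$-images of the stars $\partial_S(v_i),\partial_S(w_i),\partial_S(u_i)$ then shows that $f(E(T_i))$ is exactly the set of edges of $C_i$ incident to $\{z_i,\phi(w_i),\phi(u_i)\}$, and that these form a triangle with a doubled edge on the three vertices $z_i,\phi(w_i),\phi(u_i)$. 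In particular no edge of $C_i$ leaves the set $\{z_i,\phi(w_i),\phi(u_i)\}$, so this set is a union of components of $C_i$; as $C_i$ is connected, $C_i$ equals that triangle with a doubled edge. Doing this for $i=1,2,3$ shows that $G$ is one vertex joined by three bridges to three copies of a triangle with a doubled edge, which is precisely $S$; hence $G=S$.

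I expect the main obstacle to be the claim $\phi(v_i)=z_i$ of the second paragraph. An $H$-coloring is only forced to map bridges to bridges, not non-bridges to non-bridges, so a priori an edge of one of the triangles $T_i$ could receive a bridge color $b_j$, letting the coloring spill from one ``arm'' of $G$ into another; ruling this out needs the careful local bookkeeping at $w_i$ and $u_i$ sketched above. Everything after that is purely structural, the only other point deserving a word of care being the exclusion (already noted) of the two-vertex multigraph, which is needed for $\phi$ to be well defined.
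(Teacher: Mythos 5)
Your reconstruction argument is essentially sound, and it is a genuinely different route from the one this paper (and \cite{PetersenRemark}) uses for statements of this type. The paper's template --- see the proofs of Theorems \ref{S16thmUnique} and \ref{PetersenSimple} --- is indirect: one shows that every edge of $G$ adjacent to a used edge is itself used, concludes by connectivity that all of $E(G)$ is used, deduces $|E(G)|\le|E(S)|$ and hence $|V(G)|\le 10$, and then invokes a classification of the small cubic graphs without a perfect matching (via Lemma \ref{Lemma 1}(c) and a Proposition-\ref{property}-style statement). You instead pin down the image of the coloring vertex by vertex, starting from the three mutually adjacent bridges of $S$ and Lemma \ref{Lemma 1}(e), and rebuild $S$ inside $G$ directly; this buys you a self-contained proof with no appeal to a catalogue of small graphs, at the cost of more local bookkeeping. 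The step you correctly identify as the crux, $\phi(v_i)=z_i$, does close as sketched: if $\phi(v_1)=y$ then $\{f(v_1w_1),f(v_1u_1)\}=\{b_2,b_3\}$, and whichever endpoint of $b_2$ one takes for $\phi(w_1)$, the set $f(\partial_S(u_1))$ either has fewer than three elements or consists of edges with no common endpoint.

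One pair of justifications in your third paragraph is wrong as written, though the conclusions are true. You argue $\phi(w_i)\neq z_i$ ``since $f(v_iw_i)$ joins $\phi(v_i)$ to $\phi(w_i)$ and $G$ is loopless,'' and $\phi(w_i)\neq\phi(u_i)$ ``for otherwise the doubled edge maps to a loop.'' The vertex map $\phi$ induced by an $H$-coloring is not an adjacency-preserving homomorphism: for an edge $xy$ of $S$ one only knows $f(xy)\in\partial_G(\phi(x))\cap\partial_G(\phi(y))$, which is perfectly consistent with $\phi(x)=\phi(y)$ and creates no loop (the proper $3$-edge-coloring of $K_4$ sends all four vertices to a single vertex of the target graph). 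The correct reason is a cardinality count: if $\phi(w_i)=z_i$ then $\{f(p_1),f(p_2)\}=\{b_i,f(v_iu_i)\}$, so $f(\partial_S(u_i))=\{f(v_iu_i),b_i\}$ has only two elements, contradicting $|\partial_G(\phi(u_i))|=3$; and if $\phi(w_i)=\phi(u_i)$ then comparing $f(\partial_S(w_i))$ with $f(\partial_S(u_i))$ forces $\{f(p_1),f(p_2)\}=\{f(v_iw_i),f(v_iu_i)\}$, so again $f(\partial_S(w_i))$ has only two elements. With these substitutions the structural reconstruction in your final paragraph goes through and the proof is complete.
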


This is the precise formulation of our second result.

\begin{theorem}\label{S16thmUnique}
Let ${G}$ be a connected simple cubic graph with  ${G \prec S_{16}}$, then ${G = S_{16}}$.
\end{theorem}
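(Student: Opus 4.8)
The plan is to reduce the statement to Proposition \ref{property}, whose hypotheses are ``simple cubic, no perfect matching, at most $16$ vertices.'' Write the given $G$-colouring of $S_{16}$ as $f\colon E(S_{16})\to E(G)$, and for each $x\in V(S_{16})$ let $\varphi(x)$ be the vertex of $G$ with $f(\partial_{S_{16}}(x))=\partial_G(\varphi(x))$; note that $f$ then restricts to a \emph{bijection} $\partial_{S_{16}}(x)\to\partial_G(\varphi(x))$, so the three edges of $S_{16}$ at $x$ receive three distinct colours. First I would check that $G$ has no perfect matching: if $M$ were one, Lemma \ref{Lemma 1}(c) would make $f^{-1}(M)$ a perfect matching of $S_{16}$, which does not exist (the absence of a perfect matching in $S_{16}$ is exactly the property of it recorded in Proposition \ref{property}). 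Consequently, by Proposition \ref{property}, it is enough to prove $|V(G)|\le 16$; and since $G$ is cubic this follows as soon as $f$ is surjective, because then $3|V(G)|/2=|E(G)|\le |E(S_{16})|=24$.

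To prove that $f$ is onto I would argue by contradiction. Every realised vertex $y=\varphi(x)$ satisfies $\partial_G(y)\subseteq f(E(S_{16}))$, so no endpoint of an omitted edge can be realised; hence if $f$ is not onto then $Y:=\varphi(V(S_{16}))$ is a nonempty proper subset of $V(G)$, and connectedness of $G$ produces an edge $yz$ with $y\in Y$, $z\notin Y$. As $yz\in\partial_G(y)\subseteq f(E(S_{16}))$ we may write $yz=f(e)$ with $e=xx'\in E(S_{16})$; the vertices $\varphi(x),\varphi(x')$ are endpoints of $yz$, and since $z\notin Y$ both equal $y$. Thus $\varphi$ ``contracts'' the edge $e$. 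Putting $\partial_G(y)=\{\alpha,\beta,\gamma\}$ with $f(e)=\alpha$, the two other edges at $x$ and the two other edges at $x'$ all carry colours from $\{\beta,\gamma\}$.

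The core of the proof is then a case analysis on where $e$ sits in $S_{16}$: it is either one of the three bridges, or one of the (few types of) edge lying inside one of the three isomorphic end-blocks, each of which is a small graph on five vertices obtained from $K_4$ by subdividing one edge. In each case I would propagate the forced colouring outward from $e$; the end-blocks are rigid enough that insisting every vertex see three distinct colours soon forces either two equal colours to meet at a vertex, or a vertex whose colour set is not the edge-neighbourhood of any vertex of $G$ --- either way contradicting that $f$ is a $G$-colouring. For an edge inside an end-block joining two vertices of its ``$K_4$ part'', or joining such a vertex to a vertex of the ``diamond'', the contradiction surfaces within that end-block after one or two propagation steps; for the remaining block-edge types and for a bridge the forced colours may temporarily ``escape'' through $\alpha,\beta,\gamma$ into neighbouring vertices of $G$, and one has to follow the chase a few steps further before a conflict appears. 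This establishes surjectivity of $f$, hence $|V(G)|\le16$, hence $G=S_{16}$.

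I expect the case analysis --- especially the bridge case and the subdivided-edge case, where the argument leaves the colour triple $\{\alpha,\beta,\gamma\}$ --- to be the only real difficulty; everything else is bookkeeping. As a cleaner alternative for the final step, one can instead observe that if $\varphi$ contracts no edge of $S_{16}$ then $\varphi$ is a homomorphism that is bijective on every neighbourhood, i.e.\ a covering map of connected cubic graphs, so $|V(G)|$ divides $16$; together with ``$G$ has no perfect matching'' and Proposition \ref{property} this again yields $G=S_{16}$.
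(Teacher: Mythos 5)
Your overall strategy is exactly the paper's: rule out a perfect matching in $G$ via Lemma \ref{Lemma 1}(c), show that the colouring $f$ is surjective onto $E(G)$ by propagating along the connected graph $G$, deduce $|V(G)|\le 16$ from $|E(G)|\le 24$, and finish with Proposition \ref{property}. Your reformulation via realised vertices --- an edge from a realised vertex $y$ to an unrealised vertex $z$ forces $\varphi$ to contract some edge $e=xx'$ of $S_{16}$ onto $y$, with the remaining edges at $x$ and at $x'$ coloured from $\{\beta,\gamma\}$ --- is a correct and clean way to arrive at the same local configuration from which the paper's case analysis starts.

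The gap is that the deferred case analysis is the entire substance of the proof, and your prediction of how it closes fails in the one case that matters most. For $e$ inside an end-block your description is essentially accurate: a short chase produces either two equal colours at a vertex of $S_{16}$ or a colour triple that is not $\partial_G(w)$ for any $w$. But when $e$ is a bridge of $S_{16}$, the forced colouring of the adjacent end-block is \emph{locally consistent}: writing $\alpha=(u,z)$, $\beta=(u,u_\beta)$, $\gamma=(u,u_\gamma)$, the chase (after using simplicity of $G$ to exclude $u_\beta$ adjacent to $u_\gamma$) produces colours on the seven block edges in which every vertex of the end-block sees a legitimate triple $\partial_G(\cdot)$, so no amount of local propagation yields a conflict. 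The paper's contradiction is global: the three bridges of $S_{16}$ must be coloured $\alpha$, $\beta$, $\gamma$ (because $z$ is unrealised), while the chase simultaneously exhibits $\beta$ and $\gamma$ on a cycle of $G$; this contradicts Lemma \ref{Lemma 1}(e), which forces every bridge of $S_{16}$ to be coloured by a bridge of $G$. Without identifying this mechanism your surjectivity argument does not close. Your covering-map alternative for the final step is a nice observation, but it only kicks in once contracted edges have already been excluded, i.e.\ after the same case analysis has been carried out.
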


\begin{proof}
As ${G \prec S_{16}}$ and ${S_{16}}$ has no a perfect matching,
then due to (c) of Lemma \ref{Lemma 1}, the graph $G$ also has no a perfect matching. 

Let ${f}$ be a ${G}$-coloring of ${S_{16}}$. If ${e \in E(G)}$, then we will say that ${e}$ is used (with respect to ${f}$),
if ${f^{-1}(e)\neq\emptyset}$. First of all, let us show that
if an edge ${e}$ of ${G}$ is used, then any edge adjacent to
${e}$ is also used.

So let ${e = (u, v)}$ be a used edge of ${G}$. For the sake of contradiction, assume that ${v}$ is incident to an edge ${z\in E(G)}$ that is not used. Assume that ${\partial_{G}(u) = \left\{a, b, e\right\} }$ (Figure \ref{theorem2f1}).

\begin{center}
\includegraphics [scale=0.6] {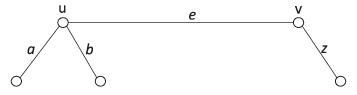}
\newcaption{The edge $e$ in the graph $G$}{theorem2f1}
\end{center}

We will use the labels of edges of $S_{16}$ from Figure \ref{SylvSimple}. The following cases are possible:
\medskip

\begin{case} 1: The edge ${e}$ colors an edge that is not a bridge in ${S_{16}}$. So it is used in end-blocks of ${S_{16}}$.
Due to symmetry of ${S_{16}}$, there will be following subcases:

\medskip

\begin{subcase} 1.1: ${f((v_1, v_2))=e}$. 

Since $z$ is not a used edge, we can assume that ${f((v_1, v_3))=a}$
and ${f((v_1,v_4))=b}$. Since adjacent edges receive different colors and $z$ is not a used edge, we have that ${f(
(v_2, v_3))=b}$ and  ${f((v_2, v_4))=a}$. This implies that
\begin{equation*}
f((v_3, v_5))=f((v_4, v_5))=e,
\end{equation*} which contradicts the fact that adjacent edges receive different colors.
\end{subcase}

\medskip

\begin{subcase} 1.2: ${f((v_1, v_3))=e}$.

Since $z$ is not a used edge, we can assume that ${f((v_1, v_2))=a}$
and ${f((v_1,v_4))=b}$. As adjacent edges receive different colors and $z$ is not a used edge, we have that ${f(
(v_2, v_3))=b}$ and  ${f((v_3, v_5))=a}$. But then ${f((v_2, v_4))=e}$, which implies that ${f((v_4, v_5))=a}$. This contradicts the fact that adjacent edges receive different colors.
\end{subcase}

\medskip

\begin{subcase} 1.3: ${f((v_3, v_5))=e}$. 

Since $z$ is not a used edge, we can assume that ${f((v_1, v_3))=a}$
and ${f((v_2,v_3))=b}$. Consider the edge $(v_1, v_2)$. Observe that its color can be either $e$, or there is an edge $h$ of $G$, such that $a$, $b$ and $h$ form a triangle in $G$ and $h$ is not incident to $u$. Since we have ruled out Case 1.1, we can assume that the color of $(v_1, v_2)$ is not $e$, hence there is the above-mentioned edge $h$. Let $x$ and $y$ be the edge of $G$ that are adjacent to $b$ and $h$, and $a$ and $h$, respectively, that are not incident to $u$. Observe that $f((v_1, v_4))=y$ and $f((v_2, v_4))=x$. On the other hand, it is not hard to see that since $z$ is not a used edge and ${f((v_3, v_5))=e}$, we have that $f((v_4, v_5))\in \{a, b\}$. This is a contradiction since there is no vertex $w$ of $G$ such that $\partial_G(w)=\{a, x, y\}$ or $\partial_G(w)=\{b, x, y\}$.

\end{subcase}
\end{case}

\medskip

\begin{case} 2: The edge ${e}$ colors an edge that is a bridge in ${S_{16}}$. 

The consideration of Case 1 implies that, without loss of generality, we can assume that the edge $e$ is not used in end-blocks of ${S_{16}}$. Assume that ${f((v,v_5))=e}$. Since $z$ is not a used edge, we can assume that ${f((v_3,v_5))=a}$ and ${f((v_4, v_5))=b}$.

Assume that $a=(u, u_a)$ and $b=(u, u_b)$. We claim that $u_a$ and $u_b$ are not joined with an edge in $G$. Assume the opposite. Let $h=(u_a, u_b)\in E(G)$. Let $x$ and $y$ be the edges of $G$ incident to $u_a$ and $u_b$, respectively, that are different from $a$ and $h$, and $b$ and $h$, respectively. Then, we can assume that 
\begin{equation*}
f((v_1, v_3))=x, f((v_2, v_3))=h,
\end{equation*} and
\begin{equation*}
f((v_2, v_4))=y, f((v_1, v_4))=h.
\end{equation*} This implies that
\begin{equation*}
a=f((v_1, v_2))=b.
\end{equation*} Hence $a$ and $b$ are parallel edges of $G$, which contradicts the simpleness of $G$. Hence $u_a$ and $u_b$ are not joined with an edge in $G$. Let $x$ and $y$ be the edges of $G$ incident to $u_a$, that are different from $a$. Similarly, let $z$ and $\alpha$ be the edges of $G$ incident to $u_b$, that are different from $b$. We can assume that
\begin{equation*}
f((v_1, v_3))=x, f((v_2, v_3))=y,
\end{equation*} and
\begin{equation*}
f((v_2, v_4))=z, f((v_1, v_4))=\alpha.
\end{equation*} This implies that $x$ and $\alpha$ are sharing a vertex $u_{x, \alpha}$ of $G$, $y$ and $z$ are sharing a vertex $u_{y, z}$ of $G$, and $u_{x, \alpha}$, $u_{y, z}$ are joined with an edge $g$ of $G$, such that $f((v_1, v_2))=g$. Observe that the edges $a$ and $b$ are lying on a cycle of $G$. Now, since $z$ is not a used edge, we have that the other two bridges of $G$ ($\neq (v,v_5)$) are colored with $a$ and $b$. This contradicts (e) of Lemma \ref{Lemma 1}, since $a$ and $b$ are not bridges of $G$.
\end{case}

\bigskip

The consideration of above two cases implies that any used edge of $G$ is adjacent to a used edge. Since $G$ is connected, we have that all edges of $G$ are used. Since $|E(S_{16})|=24$, we have that $|E(G)|\leq 24$, or $|V(G)|\leq 16$. Proposition \ref{property} implies that $G=S_{16}$. The proof of the theorem is completed.
\end{proof}


\bigskip

In \cite{PetersenRemark}, the following result is obtained:

\begin{theorem} (V. V. Mkrtchyan, \cite{PetersenRemark})
Let ${G}$ be a connected bridgeless cubic graph with  ${G \prec P}$, then ${G = P}$.
\end{theorem}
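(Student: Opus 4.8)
The plan is to argue along the same lines as the proof of Theorem \ref{S16thmUnique}, with $S_{16}$ replaced by $P$. Fix a $G$-coloring $f:E(P)\to E(G)$. Since the three edges of $G$ incident to a common vertex are pairwise distinct, $f$ is automatically a proper edge-coloring of $P$, and for every $p\in V(P)$ there is a unique vertex $\phi(p)\in V(G)$ with $f(\partial_P(p))=\partial_G(\phi(p))$. By Lemma \ref{Lemma 1}(b) we have $\chi'(G)\geq\chi'(P)=4$, so $G$ is not $3$-edge-colorable; and we may assume $G$ is simple — a pair of parallel edges of $G$ is an even subgraph, so by Lemma \ref{Lemma 1}(d) its $f$-preimage would be an even subgraph of $P$, i.e. a disjoint union of cycles of length $6$ or $8$ (the only even cycles in $P$), and a short analysis of this situation yields a contradiction. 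Call an edge $g$ of $G$ used if $f^{-1}(g)\neq\emptyset$. The key claim is: if $g$ is used, then every edge of $G$ adjacent to $g$ is used. Granting the claim, connectivity of $G$ forces every edge of $G$ to be used, so $f$ is onto $E(G)$, whence $|E(G)|\leq|E(P)|=15$ and $|V(G)|\leq 10$. Then $G$ is a connected non-$3$-edge-colorable simple cubic graph on at most $10$ vertices, so Proposition \ref{Non3edgeProp} gives $G=P$ or $G=S'$; since $S'$ contains a bridge (Figure \ref{theorem3f3}) while $G$ is bridgeless, we conclude $G=P$.

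To prove the claim, suppose $e=(u,v)$ is used while some edge $z\in\partial_G(v)$ is not used, and write $\partial_G(u)=\{e,a,b\}$. For any $\tilde e=(p,q)\in f^{-1}(e)$ we have $e\in\partial_G(\phi(p))$, so $\phi(p)\in\{u,v\}$; but $\phi(p)=v$ would give $z\in\partial_G(v)=f(\partial_P(p))$, making $z$ used, a contradiction, so $\phi(p)=u$, and similarly $\phi(q)=u$. Hence $W:=\phi^{-1}(u)$ — equivalently, the set of vertices of $P$ whose three incident colors are exactly $e,a,b$ — is nonempty and coincides with the set of endpoints of $e$-colored edges; each vertex of $W$ lies on exactly one edge of each color $e$, $a$, $b$; the $e$-colored edges form a perfect matching of $W$; and the $a$- and $b$-colored edges incident to $W$ may leave $W$.

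Now I would finish using the structure of $P$. If $W=V(P)$, then $f$ uses only the three colors $e,a,b$, i.e. $f$ is a proper $3$-edge-coloring of $P$ — impossible. So $W\subsetneq V(P)$. A vertex of $V(P)\setminus W$ has at most one neighbor in $W$: two such neighbors would be joined to it by two distinct edges colored from $\{a,b\}$, hence one colored $a$ and one colored $b$, forcing $\phi$ of that vertex to be simultaneously the second endpoint of $a$ and the second endpoint of $b$ — two distinct vertices of $G$ (here simpleness of $G$ is used). Therefore the number of edges between $W$ and $V(P)\setminus W$, which equals $3|W|-2e(P[W])$, is at most $|V(P)\setminus W|=10-|W|$; since $P$ has girth $5$ (so $P[W]$ is acyclic for $|W|\leq 4$, and $P$ carries only few edges on few vertices), the inequality $3|W|-2e(P[W])\leq 10-|W|$ leaves only $|W|\in\{2,4\}$. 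The remaining step — ruling out these two configurations of $W$ by a direct inspection of the Petersen graph — is the part that genuinely uses that $P$ is the Petersen graph and not merely a girth-$5$ cubic graph, and it is the step I expect to be the main obstacle; the reduction to simple $G$ and the verification that $S'$ has a bridge are lesser technical points.
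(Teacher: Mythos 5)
Your top-level strategy is the right one and matches the approach the paper takes for the closely analogous Theorem~\ref{PetersenSimple} (the paper only cites this particular statement from \cite{PetersenRemark}, but its own proof of the simple-graph version follows exactly your outline): prove that a used edge forces all adjacent edges to be used, conclude all of $E(G)$ is used, get $|V(G)|\leq 10$, invoke Proposition~\ref{Non3edgeProp}, and discard $S'$. Your way of discarding $S'$ (it has a bridge, $G$ is bridgeless) is legitimate and even a little cleaner than the paper's. The identification of $W=\phi^{-1}(u)$ with the set of endpoints of $e$-colored edges, the perfect-matching observation, the exclusion of $W=V(P)$ via non-$3$-edge-colorability, and the counting bound $3|W|-2e(P[W])\leq 10-|W|$ eliminating $|W|\in\{6,8\}$ are all correct.

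However, the proof is not complete, and the missing part is where essentially all of the content of the theorem lives. First, the cases $|W|=2$ and $|W|=4$ are left open. The case $|W|=4$ is actually quick to close with your own tools: $P[W]$ must be a path $w_1w_2w_3w_4$ (a tree with a perfect matching), and in the Petersen graph the endpoints of any path of length $3$ are at distance $2$ and hence have a common neighbor $r\notin W$, which then has two neighbors in $W$, contradicting your "at most one neighbor in $W$" bound. But $|W|=2$ is the genuinely hard configuration: it is exactly the situation the paper's proof of Theorem~\ref{PetersenSimple} spends its entire case analysis on, propagating the coloring to the second neighborhood of the $e$-colored edge, showing that $\{a,a_1,a_2\}$ and $\{b,b_1,b_2\}$ must each form a triangle of $G$, and deriving the contradiction $a_3=f((u_1,v_1))=b_3$ combined with $a_3\neq b_3$. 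No counting argument replaces this; it uses the specific adjacencies of $P$. Second, your reduction to simple $G$ is asserted ("a short analysis yields a contradiction") but never carried out, and it is load-bearing: you use simpleness both in the "at most one neighbor in $W$" step and in order to apply Proposition~\ref{Non3edgeProp}, and bridgeless non-$3$-edge-colorable cubic graphs with digons do exist, so the case is not vacuous. As it stands, the key claim --- that a used edge cannot meet an unused edge --- remains unproved.
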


Below we prove the analogue of this result for simple cubic graphs that may contain bridges. Our strategy of the proof is similar to that of given in \cite{PetersenRemark}.

\begin{theorem}\label{PetersenSimple}
If ${G}$ is a connected simple cubic graph with ${G \prec P}$, then ${G = P}$.
\end{theorem}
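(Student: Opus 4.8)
The plan is to mimic the structure of the proof of Theorem \ref{S16thmUnique}: let $f$ be a $P$-coloring of $G$, call an edge $e\in E(G)$ \emph{used} if $f^{-1}(e)\neq\emptyset$, and first show that the set of used edges is closed under adjacency, i.e. if $e=(u,v)$ is used then every edge incident to $v$ is used. Since $G$ is connected, this forces every edge of $G$ to be used, hence $|E(G)|\leq |E(P)|=15$, so $|V(G)|\leq 10$. The second phase is then to argue that a connected simple cubic graph on at most $10$ vertices admitting a $P$-coloring must equal $P$. Here I would use Lemma \ref{Lemma 1}: the Petersen graph has no edge-$3$-coloring ($\chi'(P)=4$) and no bridges, so by (b) $\chi'(G)\geq \chi'(P)=4$, meaning $G$ is not $3$-edge-colorable, and by (e) $G$ is bridgeless (an image of a bridge would be a bridge of $P$). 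So $G$ is a connected bridgeless non-$3$-edge-colorable simple cubic graph with $|V(G)|\leq 10$; by Proposition \ref{Non3edgeProp}, $G=P$ or $G=S'$. It then remains to rule out $G=S'$, i.e. to show $S'\not\prec P$; since $S'$ contains a triangle (it has only $10$ vertices and is not $P$), one can use the remark that $P$ is triangle-free together with the forbidden local structure — the three edges of a triangle of $G$ would have to map to three edges of $P$ pairwise at distance $\le$ something impossible — or simply invoke the earlier theorem (Mkrtchyan) combined with a direct check; alternatively a short ad hoc argument on the triangle of $S'$ works.

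For the first phase, the detailed work is the case analysis establishing closure under adjacency. Suppose $e=(u,v)$ is used but some edge $z$ incident to $v$ is not used, with $\partial_G(u)=\{a,b,e\}$. Fix a preimage edge of $P$ mapping to $e$; because $z$ is unused, the images under $f$ of edges around the relevant vertices of $P$ are forced, step by step, and one wants to derive a contradiction with the local constraint $f(\partial_P(x))=\partial_G(y)$ for each vertex $x$ of $P$ (equivalently, since $G=P$... no — since the \emph{target} $G$ may have triangles, one must use the actual vertex condition, not just adjacency). The Petersen graph is vertex- and edge-transitive and its edge set under the distance-$2$ relation is highly structured, so after choosing which edge of $P$ is colored $e$ up to symmetry there is essentially one configuration to analyze; pushing the forced colors around a $5$-cycle or around the spokes produces two adjacent edges of $P$ forced to the same color, or a vertex of $P$ whose three incident edges get colors that are not the three edges at any vertex of $G$. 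This is the analogue of Cases 1.1–1.3 and Case 2 in the previous proof, but simplified because $P$ has no bridges (so the "bridge" case is vacuous except for handling bridges of $G$ via Lemma \ref{Lemma 1}(e)).

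The main obstacle I expect is precisely this forced-propagation case analysis: unlike $S_{16}$, the Petersen graph has girth $5$ and no bridges, so the bookkeeping is different, and one has to be careful that $G$ (the target) is allowed to have short cycles, which means the argument cannot be phrased purely in terms of adjacency of color classes but must track the vertex-neighborhood condition. Concretely, when two edges of $P$ incident to a common vertex get colors $a$ and $b$, one knows $a,b$ share a vertex $u$ in $G$ and the third edge at $u$ is $e$ (or some other edge), and propagating this constraint around $P$ must be done consistently; the potential trap is a configuration where the forced colors happen to be realizable because $G$ has a triangle, which is exactly why the triangle-freeness of $P$ (used in the \emph{other} direction) and a careful treatment of whether $a$ and $b$ are joined by an edge in $G$ enter the argument. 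Once closure under adjacency is in hand, the rest is short: the vertex-count bound plus Proposition \ref{Non3edgeProp} plus eliminating $S'$.
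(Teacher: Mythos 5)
Your overall skeleton matches the paper's: define used edges, prove closure under adjacency, conclude all edges are used so $|E(G)|\leq 15$ and $|V(G)|\leq 10$, invoke Proposition \ref{Non3edgeProp}, and then eliminate $S'$. However, two of your concrete steps run the coloring map in the wrong direction. Since $G\prec P$ means there is a map $f:E(P)\rightarrow E(G)$, Lemma \ref{Lemma 1}(e) applied here only says that bridges of $P$ (there are none) map to bridges of $G$; it does \emph{not} say $G$ is bridgeless, so that claim is unjustified (harmless for applying Proposition \ref{Non3edgeProp}, which does not assume bridgelessness, but it cannot be used to dismiss $S'$). The same confusion undermines your proposed elimination of $S'$: the triangle of $S'$ lies in the \emph{codomain} of $f$, so nothing ``maps from'' it into $P$, and the cited Mkrtchyan theorem applies only to \emph{bridgeless} graphs, whereas $S'$ has a bridge, so it does not apply either. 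The paper's actual argument uses exactly that bridge $a$ of $S'$: if all edges of $S'$ were used, the preimages of the two components of $S'-a$ would have to be separated in the $2$-edge-connected graph $P$, forcing a vertex $w$ of $P$ whose incident edges receive colors from both components, and no vertex $y$ of $S'$ has such a neighborhood. You would need to supply this (or an equivalent) argument.

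More importantly, the closure-under-adjacency step --- which is essentially the entire content of the paper's proof --- is only described, not carried out. The paper's execution: with $f((v_3,v_4))=e$, $f((v_4,v_5))=a$, $f((u_4,v_4))=b$, $f((u_3,v_3))=b$, $f((v_2,v_3))=a$ fixed by symmetry, it sets $a_1=f((v_1,v_5))$, $a_2=f((v_1,v_2))$, $b_1=f((u_1,u_4))$, $b_2=f((u_1,u_3))$ and splits into cases according to whether $\{a,a_1,a_2\}$ and $\{b,b_1,b_2\}$ form triangles of $G$; the non-triangle cases force $f((u_1,v_1))=a$ and hence two parallel edges in the simple graph $G$, while the double-triangle case forces $a_3=f((u_1,v_1))=b_3$ and yields a contradiction whether or not $a$ and $b$ lie in a common triangle. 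You correctly anticipate that triangles of $G$ are the crux and that the vertex condition (not mere adjacency) must be tracked, but until this case analysis is actually performed, what you have is a plan rather than a proof.
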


\begin{proof}
By (b) of Lemma \ref{Lemma 1}, $G$ is non-$3$-edge-colorable. Let $f$  be a $G$-coloring of $P$.
As in the proof of the previous theorem, we say that an edge ${e \in E(G)}$ is used (with respect to ${f}$) if ${f^{-1}(e)\neq\emptyset}$. First of all, let us show that if an edge of ${G}$ is used, then all edges adjacent to it are used.

Suppose that ${e = (u, v)}$ is a used edge, and for the sake of contradiction, assume that the 
edge ${z}$ incident to ${v}$ is not used. Assume that ${\partial_{G}(u) = \left\{a,b, e\right\} }$. We will make use of labels of vertices of $P$ given on Figure \ref{PetersenGraph}.

Since $e$ is a used edge, due to symmetry of $P$, we can assume that ${f((v_3,v_4)) = e}$, ${f((v_4,v_5)) = a}$ and ${f((u_4,v_4)) = b}$. Since $z$ is not a used edge, due to symmetry $P$, we can assume that ${f((u_3,v_3)) = b}$ , ${f((v_2,v_3)) = a}$.

Define 

\begin{equation*}
{a_1 = f((v_1,v_5))}, \text{ and } {a_2 = f((v_1,v_2))}.
\end{equation*} Observe that since $f$ is a $G$-coloring of $P$, we have that $a_1$ and $a_2$ are adjacent edges of $G$. Moreover, each of them is adjacent to $a$. 

Similarly, define the edges 
\begin{equation*}
{b_1 = f((u_1,u_4))}, \text{ and } {b_2 = f((u_1,u_3))}.
\end{equation*} Again, we have that $b_1$ and $b_2$ are adjacent edges of $G$. Moreover, each of them is adjacent to $b$.

We will consider three cases:

\bigskip

\begin{case} 1: The edges $a_1$, $a_2$ and $a$ do not form a triangle in $G$. 

Observe that in this case ${f((u_1,v_1)) = a}$. This implies that the edges $a$, $b_1$, $b_2$ must be
incident to the same vertex. However, this is possible only when $b_1$ and $b_2$ are two
parallel edges. This is a contradiction, since $G$ is a simple graph.
\end{case}

\bigskip

\begin{case} 2: The edges $b_1$, $b_2$ and $b$ do not form a triangle in $G$.

This case is similar to Case 1.
\end{case}

\bigskip

\begin{case} 3: The edges $a_1$, $a_2$ and $a$ form a triangle in $G$. Similarly, $b_1$, $b_2$
and $b$ form a triangle.

Let $a_3$ be the edge of $G$ that is adjacent to $a_1$, $a_2$
and is not adjacent to $a$. Similarly, let $b_3$ be the edge of $G$
that is adjacent to $b_1$, $b_2$ and is not adjacent to $b$. Note that the edges $a_3$ and $b_3$ exist, since $G$ is simple.

Observe that 

\begin{equation*}
{a_3 = f((u_1,v_1)) = b_3},
\end{equation*}hence ${a_3 =b_3}$. Depending on whether $a$ and $b$ belong to the same triangle
or not, we will consider the following sub-cases:

\medskip

\begin{subcase} 3.1: $a$ and $b$ belong to different triangles.

Observe that in this case the edge $e$ must belong to both of them, hence we have the situation depicted on Figure \ref{theorem3f1}. It is not hard to see that in this case ${a_3\neq b_3}$, which is a contradiction.
\end{subcase}

\vspace{13mm}

\begin{center}
  \begin{minipage}{0.4\textwidth}
  \hspace{10mm}
    \includegraphics[scale=0.6]{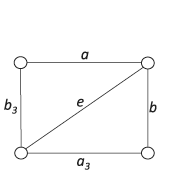}
    \newcaption{$a$ and $b$ belong to different triangles.}{theorem3f1}
  \end{minipage}
\hspace{5mm}
\begin{minipage}{0.4\textwidth}

\hspace{5mm}
    \includegraphics[scale=0.6]{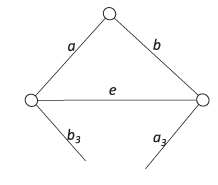}
    \newcaption{$a$ and $b$ belong to the same triangle.}{theorem3f2}
  \end{minipage}
  \end{center}
  
  \medskip
  
\begin{subcase} 3.2: Let $a$ and $b$ belong to the same triangle (See Figure \ref{theorem3f2}). 

In
this case $b_3$ should be adjacent to $a$, and $a_3$ should be adjacent to $b$. It is not hard to see that in this case ${a_3\neq b_3}$, which is a contradiction.
\end{subcase} 
\end{case}

\medskip

The consideration of above three cases implies that any used edge of $G$ is adjacent to a used edge. Since $G$ is connected, we have that all edges of $G$ are used. Since $|E(P)|=15$, we have that $|E(G)|\leq 15$, or $|V(G)|\leq 10$. Proposition \ref{Non3edgeProp} implies that $G=P$ or $G=S'$.

In order to complete the proof of the theorem, it suffices to show that $P$ does not admit an $S'$-coloring, such that all edges of $S'$ are used. We will use the labels of edges of $S'$ given on Figure \ref{theorem3f3}.

For the sake of the contradiction, assume that $P$ admits an $S'$-coloring $f$, such that all edges of $S'$ are used. Due to symmetry of $P$, we can assume that $f((v_3, v_4))=a$. Consider the connected components $A$ and $B$ of $S'-a$. Since $P$ is $2$-edge-connected, there is a vertex $w$ of $P$ such that $w$ is incident to at least one edge that has color from $A$, and at least one edge that has color from $B$. Observe that this vertex violates the definition of $S'$-coloring. This is a contradiction, hence $P$ does not admit an $S'$-coloring as well. The proof of the theorem is completed.
\end{proof}

\medskip

The theorem proved above implies that 

\begin{corollary} $P$ does not admit an $S_{16}$-coloring. 
\end{corollary}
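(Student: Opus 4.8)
The corollary is an immediate consequence of Theorem \ref{PetersenSimple}, so the plan is simply to spell out that deduction. I would argue by contradiction: suppose that $P$ \emph{does} admit an $S_{16}$-coloring. By the definition of admitting a coloring, this is precisely the statement $S_{16}\prec P$, i.e.\ there is a mapping $f\colon E(P)\to E(S_{16})$ with $V(f)=V(P)$.

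The next step is to apply Theorem \ref{PetersenSimple} with the role of $G$ played by $S_{16}$. To justify this I only need to observe that $S_{16}$ is a connected simple cubic graph — all three properties being immediate from its definition (Figure \ref{SylvSimple}) — so that $S_{16}$ is an admissible choice of $G$ in the hypothesis of that theorem. Theorem \ref{PetersenSimple} then yields $S_{16}=P$, that is, $S_{16}$ and $P$ are isomorphic.

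Finally I would close the argument with a cardinality comparison: $|V(S_{16})|=16\neq 10=|V(P)|$, so $S_{16}$ and $P$ cannot be isomorphic, contradicting the previous step. Hence $P$ admits no $S_{16}$-coloring. I do not expect any genuine obstacle here: the only point needing a moment's attention is confirming that $S_{16}$ indeed satisfies the hypotheses of Theorem \ref{PetersenSimple} (connected, simple, cubic), after which the conclusion follows from a single application of that theorem together with a trivial vertex count. One could alternatively combine Theorem \ref{S16thmUnique} with the transitivity of the relation $\prec$, but the route through Theorem \ref{PetersenSimple} is the shortest.
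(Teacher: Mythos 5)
Your proposal is correct and is exactly the deduction the paper intends: the corollary is stated as an immediate consequence of Theorem \ref{PetersenSimple}, obtained by taking $G=S_{16}$ (a connected simple cubic graph), so that $S_{16}\prec P$ would force $S_{16}=P$, which is impossible since $|V(S_{16})|=16\neq 10=|V(P)|$. No further comment is needed.
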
 This corollary and Theorem \ref{S16thmUnique} suggest that a statement analogous to Sylvester coloring conjecture is impossible in the class of simple cubic graphs.

\bigskip

Our next result states that any cubic graph admits a coloring with edges of $S$, such that $80\%$ of vertices meet the constraints of Sylvester coloring conjecture. 

\begin{theorem}\label{SylvesterColResult} Let $G$ be a cubic graph. Then, there is a mapping $f:E(G)\rightarrow S$, such that
\begin{equation*}
|V(f)|\geq \frac{4}{5}\cdot |V(G)|,
\end{equation*}and for any $v \in V(G)\backslash V(f)$ there are two edges $e, e' \in \partial_G(v)$, such that $f(e)=f(e')$.
\end{theorem}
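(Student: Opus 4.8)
I would start from the observation that $S$, the Sylvester graph on ten vertices, decomposes into structure that makes "most vertices" easy to satisfy. The key is that $S$ contains a useful subgraph: recall that $S$ has a bridge (or a small set of bridges) and that deleting the "core" leaves a controllable piece; more concretely, $S$ contains $P$-like or triangle-like substructure at the ends. The natural first move is to reduce to connected cubic graphs, and then, exactly as in the proof of the $S_4$-coloring theorem, to handle bridges by induction. So I would set up an induction on the number of bridges of $G$, using Theorem~\ref{PetersenTheorem} and Lemma~\ref{EndBlocks} to extract $1$-factors and $2$-factors in the bridgeless / few-bridge pieces.

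\medskip

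The heart of the matter is the bridgeless case. Here I would take a maximal partial proper $3$-edge-coloring of $G$ minimizing the number of uncolored edges; by the cited results of Fouquet--Vanherpe / Steffen (the discussion preceding Lemma~\ref{VertexDisjoint}), the uncolored edges form a matching $M$, each uncolored edge $e$ spawns a vertex-disjoint odd cycle $C_e$, and all vertices \emph{off} these odd cycles already see three distinctly-colored edges. Now I would embed the three colors $1,2,3$ into three edges of $S$ meeting at a common vertex $y_0$ of $S$ (so that every vertex of $G$ not touched by the $C_e$'s lands on $y_0$ and belongs to $V(f)$), and then deal with each odd cycle $C_e$ separately: on $C_e$ I must recolor locally with edges of $S$, and the claim would be that on an odd cycle of length $2t+1$ one can color so that all but at most one vertex — or more precisely a $\tfrac45$-fraction of the vertices — satisfy the Sylvester condition, with the exceptional vertices being exactly those where two incident edges get the same $S$-color (the "for any $v\in V(G)\setminus V(f)$" clause). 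The fraction $\tfrac45$ strongly suggests that each "bad" odd cycle contributes exactly one bad vertex out of at least five, i.e. that the shortest relevant odd cycles have length $\geq 5$ — which is consistent with $G$ being simple/cubic, girth considerations, and the triangles having been absorbed into $V(f)$ by the common-vertex trick. I would therefore prove: each odd cycle $C_e$ can be $S$-colored (consistently with the $1,2,3\mapsto S$ assignment on its boundary) so that it contains at most one vertex outside $V(f)$, and that vertex has a repeated color; since $|C_e|\geq 5$ and the $C_e$ are vertex-disjoint, summing gives $|V(f)|\geq \tfrac45|V(G)|$.

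\medskip

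For the bridge induction step I would mimic Case~1 / Case~2 of the $S_4$-theorem: either all end-block bridges meet at one cut-vertex (a finite check, coloring each end-block's $2$-factor and bridge with a suitable "branch" of $S$ emanating from the image of the cut-vertex, all such vertices staying in $V(f)$), or there are two end-blocks with non-adjacent bridges, in which case I splice them out, apply induction to the smaller graph $H$, and extend — paying attention to whether the image edge is a "matching-type" or "cycle-type" edge of $S$, just as in Subcases~2.1--2.2. The point is that the spliced-in end-blocks, being bridgeless with one degree-$2$ vertex, have $2$-factors by Lemma~\ref{EndBlocks}, so they can be colored with all their vertices in $V(f)$; the induction then only loses the $\tfrac15$-fraction coming from the bridgeless core, so the ratio is preserved.

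\medskip

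\textbf{Main obstacle.} The delicate point is the odd-cycle recoloring lemma: I must show that an odd cycle $C_e$ — whose two "attachment" edges at each of its vertices carry prescribed colors coming from the ambient $3$-coloring — can be re-colored with edges of $S$ so that \emph{exactly one} of its $\geq 5$ vertices fails the Sylvester condition, and that failure is of the benign "two equal colors" type, \emph{while not breaking} the vertices of $G$ adjacent to $C_e$ that previously were in $V(f)$. This is really a statement about what local color patterns $\partial_S(y)$ are available in $S$ around its bridge versus around its end-blocks, and it is where the specific structure of $S$ (Figure~\ref{SylvGraph}) — its two triangle-like end-blocks joined through the central part — has to be used in an essential way. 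I would expect to spend most of the work enumerating, up to the symmetry of $S$, how a path of edges of $S$ can "turn around" at an odd cycle, and checking that the length-$5$ case is tight while longer odd cycles are strictly easier.
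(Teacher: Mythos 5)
Your central mechanism is the same as the paper's: take a partial proper $3$-edge-colouring of (most of) $G$ minimizing the number of uncoloured edges, send the three colour classes to the three bridges $a,b,c$ of $S$, which meet at its central vertex, use Lemma~\ref{VertexDisjoint} to bound the number of uncoloured edges by the number of vertex-disjoint odd cycles, and charge one ``bad'' vertex (of the benign repeated-colour type) to each uncoloured edge. Two remarks, one of which is a genuine gap.

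First, the gap: your count only yields $\frac45$ if every cycle $C_e$ has length at least $5$, and you justify this by ``girth considerations'' and by claiming triangles are ``absorbed by the common-vertex trick.'' Neither works. Cubic graphs (even simple ones) can have girth $3$, a cycle $C_e$ can perfectly well be a triangle, and mapping the three colours to a common vertex of $S$ does nothing to prevent this; in the worst case you get one bad vertex per three vertices, i.e.\ only a $\frac23$ bound. The paper closes exactly this hole with a separate triangle-elimination step: it inducts on $|V(G)|$, contracting any \emph{contractible} triangle $T$ and lifting an $S$-colouring of $G/T$ back to $G$ (two easy subcases according to whether the contracted vertex lies in $V(g)$), and it peels off the \emph{non-contractible} triangles, which are end-blocks isomorphic to the end-blocks of $S$ and can therefore be coloured perfectly by those end-blocks. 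Only after this is the remaining graph $G'$ triangle-free, so that the odd cycles genuinely have length at least $5$. Without some such device your argument does not reach $\frac45$.

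Second, your self-identified ``main obstacle'' --- an odd-cycle recolouring lemma requiring a careful enumeration of how paths of edges of $S$ can turn around a cycle --- is not needed. Since one only has to produce \emph{one} bad vertex per uncoloured edge, it suffices to colour the single uncoloured edge $e$ with whichever of $a,b,c$ occurs exactly once among its four neighbouring edges: this creates a repeated colour at exactly one endpoint of $e$ and leaves every other vertex of $G'$ seeing all of $a,b,c$, hence mapped to the central vertex of $S$. The surrounding cycle $C_e$ is used only for counting, never recoloured. Likewise, the induction on the number of bridges that you import from the $S_4$-theorem is superfluous here: the parsimonious-colouring argument is indifferent to bridges, and the only structural reduction the paper needs is the triangle one described above.
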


\begin{proof} We prove the theorem by induction on the number of vertices. If $|V(G)| = 2$, we can take an arbitrary vertex $w$ of $S$, and color the three edges of $G$ with edges incident to $w$. It is trivial to see that this coloring satisfies the condition of the theorem. Now, assume that the statement of the theorem holds for all cubic graphs with $|V(G)|<n$, and consider an arbitrary cubic graph $G$ containing $n\geq 4$ vertices.

We will consider two cases.

\bigskip

\begin{case} 1: $G$ contains a contractible triangle $T$.

\medskip

Consider the cubic graph $H=G/T$, and let $v_T$ be the vertex of $H$ obtained by contracting $T$. Since $H$ contains $n-2$ vertices, we have that there is a mapping $g:E(H)\rightarrow S$, such that
\begin{equation*}
|V(g)|\geq \frac{4}{5}\cdot |V(H)|,
\end{equation*}and for any $v \in V(H)\backslash V(g)$ there are two edges $e, e' \in \partial_H(v)$, such that $g(e)=g(e')$. We will consider $2$ subcases.

\medskip

\begin{subcase} 1.1: $v_T\in V(g)$.

\medskip

There is a vertex $s\in V(S)$, such that $g(\partial_H(v))=\partial_S(s)$. Let $\partial_S(s)=\{\alpha, \beta, \gamma \}$. Consider a mapping $f:E(G)\rightarrow S$, obtained from $g$ as follows: color the edges of $T$ with a color from $\{\alpha, \beta, \gamma \}$, such that its end-vertices are not incident to an edge with that color. Observe that
\begin{equation*}
|V(f)|=|V(g)|+2, \text{ and } |V(G)|=|V(H)|+2,
\end{equation*}hence
\begin{equation*}
\frac{|V(f)|}{|V(G)|}=\frac{|V(g)|+2}{|V(H)|+2}\geq \frac{|V(g)|}{|V(H)|}\geq \frac{4}{5},
\end{equation*}or
\begin{equation*}
|V(f)|\geq \frac{4}{5}\cdot |V(G)|.
\end{equation*}
\end{subcase}

\begin{subcase} 1.2: $v_T\notin V(g)$.
\medskip

There are two edges $e, e' \in \partial_H(v_T)$, such that $g(e)=g(e')$. Let $x=g(e)$, and let $y$ and $z$ be two edges of $S$ that are incident to the same end-vertex of $x$ in $S$.

Consider a mapping $f:E(G)\rightarrow S$, obtained from $g$ as follows: color the edges of $T$ that are opposite to the edges with color $x$ by $y$, and color the remaining third edge of $T$ with $z$. Observe that
\begin{equation*}
|V(f)|=|V(g)|+2, \text{ and } |V(G)|=|V(H)|+2,
\end{equation*}hence
\begin{equation*}
\frac{|V(f)|}{|V(G)|}=\frac{|V(g)|+2}{|V(H)|+2}\geq \frac{|V(g)|}{|V(H)|}\geq \frac{4}{5},
\end{equation*}or
\begin{equation*}
|V(f)|\geq \frac{4}{5}\cdot |V(G)|.
\end{equation*} Moreover, for each vertex $w \notin V(f)$, there are two edges $h, h' \in \partial_G(w)$, such that $f(h)=f(h')$.
\end{subcase}

\medskip

\end{case}

\medskip

\begin{case} 2: All triangles of $G$ are not contractible.

\medskip

Let $\mathcal{T}$ be the set of all triangles of $G$. Observe that $\mathcal{T}$ can be empty. Consider a graph $G'$ obtained from $G$ by removing all vertices of $G$ that lie on a triangle of $\mathcal{T}$. Observe that $G'$ is a triangle-free graph of maximum degree at most $3$. 

We will use the labels of edges of $S$ given in Figure \ref{SylvGraph}. Consider a coloring of edges of $G'$ with colors $a, b$ and $c$, such that the number of uncolored edges is smallest. Let $e$ be an uncolored edge. Color $e$ with a color $d$ from $\{a, b, c\}$, such that there is only one edge adjacent to $e$, such that it has also color $d$. Observe that all edges of $G'$ are colored. 

Now, we are going to extend this coloring to that of $G$. Choose a triangle $T$ from $\mathcal{T}$. As $T$ is not contractible, we have that the subgraph of $G$ induced by the vertices of $T$ form an end-block $B$ of $G$. Moreover, $B$ is isomorphic to end-blocks of $S$. Let $v$ the root of $B$. Choose a color $d\in \{a, b, c\}$ such that $d$ is missing on the vertex $v$ in the coloring of $G'$. Color the bridge joining a vertex of $T$ to $v$ by $d$, and color the edges of $B$ by corresponding edges of the end-block of $S$, which contains a vertex incident to $d$. Let $f$ be the resulting coloring.

Observe that all edges of $G$ are colored in $f$. Moreover, vertices of $V(G)\backslash V(f)$ lie in $G'$. Since $G'$ is triangle-free, we have that the cycles corresponding uncolored edges are of length at least $5$. Since they are vertex-disjoint (Lemma \ref{VertexDisjoint}), we have that their number is at most $\frac{|V(G')|}{5}$. It is not hard to see that each uncolored edge $e$ is incident to a vertex $v$ such that $v\in V(G)\backslash V(f)$. Moreover, $|V(G)\backslash V(f)|$ coincides with the number of uncolored edges, which implies that 
\begin{equation*}
|V(G)\backslash V(f)|\leq \frac{|V(G')|}{5} \leq \frac{|V(G)|}{5},
\end{equation*}or
\begin{equation*}
|V(f)|\geq \frac{4}{5}\cdot |V(G)|.
\end{equation*} Finally, for each vertex $w \notin V(f)$, there are two edges $h, h' \in \partial_G(w)$, such that $f(h)=f(h')$.

\end{case}

The proof of the theorem is completed.
\end{proof}

\begin{corollary}Let $G$ be a cubic graph. Then, there is a mapping $f:E(G)\rightarrow S$, such that 
\begin{equation*}
|V(f)|\geq \frac{4}{5}\cdot |V(G)|.
\end{equation*}
\end{corollary}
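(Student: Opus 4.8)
The plan is to obtain this statement as an immediate corollary of Theorem \ref{SylvesterColResult}. That theorem already asserts the existence of a mapping $f:E(G)\rightarrow S$ with $|V(f)|\geq \frac45\cdot|V(G)|$; it merely asserts \emph{in addition} that every vertex of $V(G)\backslash V(f)$ is incident to two edges receiving the same color under $f$. Since the corollary requires only the cardinality bound, it suffices to take the very mapping produced by Theorem \ref{SylvesterColResult} and discard the supplementary condition.

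Consequently there is essentially no proof to carry out: the single step is to quote Theorem \ref{SylvesterColResult} and drop the second clause of its conclusion. I would only remark that the stronger form of the theorem is precisely what makes the induction in its proof run smoothly --- the extra clause is needed in Subcase 1.2 and in Case 2, where the vertices left outside $V(f)$ must be of the special ``repeated color'' type, so that contracting a triangle or extending the coloring across an end-block does not introduce new bad vertices. In other words, the strengthening is an artifact of the induction, and the present corollary simply records the clean statement one actually cares about.

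Since the deduction is immediate, there is no genuine obstacle here; the only point requiring a moment's care is that the mapping $f$ and the set $V(f)$ named in the corollary are literally the same objects as those produced in Theorem \ref{SylvesterColResult}, so that the inequality transfers verbatim without any re-indexing or re-estimation.
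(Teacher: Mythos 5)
Your proposal is correct and matches the paper exactly: the corollary is an immediate weakening of Theorem \ref{SylvesterColResult}, obtained by taking the same mapping $f$ and discarding the clause about vertices outside $V(f)$. The paper itself offers no separate argument, so nothing further is needed.
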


In the end of the paper, we verify Conjecture \ref{mainConjecture} in the class of claw-free cubic graphs. Our main ingredients are the characterization of claw-free cubic graphs (Proposition \ref{prop:ClawfreeCharac}) and Theorem \ref{thm:PseudoCubicS4} about $S_{4}$-colorability of arbitrary cubic pseudo-graphs. 

\begin{center}
    \includegraphics [scale=0.8] {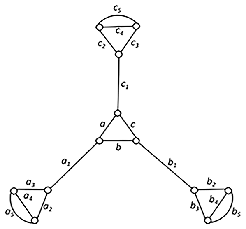}
    \newcaption{The graph $S_{12}$}{fig:Sylvester12}
\end{center}

Let $S_{12}$ be the cubic graph from Figure \ref{fig:Sylvester12}. We prove somewhat stronger statement.

\begin{theorem}\label{thm:ClawFreeS12} Let $G$ be a claw-free cubic graph. Then $S_{12}\prec G$.
\end{theorem}

\begin{proof} Our proof is by induction on $|V(G)|$. Clearly, the statement of the theorem is true when $|V(G)|=2$ ($K_{2}^{3}$ is $3$-edge-colorable). Assume that it remains true for all claw-free cubic graphs with $|V(G)|<n$, and consider a claw-free cubic graphs with $|V(G)|=n$. Without loss of generality, we can assume that $G$ is connected. 

We can apply Proposition \ref{prop:ClawfreeCharac}. If $G$ meets the conditions (1) or (2) of the proposition, then $G$ is $3$-edge-colorable, hence this case is similar to the base of induction. Thus, we can assume that $G$ meets the condition (3) of the Proposition \ref{prop:ClawfreeCharac}. 

Let us show that we can assume that $G$ all strings of diamonds and $2$-cycles of $G$ are trivial. On the opposite assumption, consider a non-trivial string $J$ of diamonds and $2$-cycles of $G$. Let $a$ and $b$ be the head and tail of $J$, respectively. Moreover, let $c$ and $d$ be the neighbors of $a$ and $b$, respectively, that lie outside $J$. If $c\neq d$, then consider a cubic graph $G'$ defined as follows:
\[G'=(G-V(J))+(c,d).\]
Observe that $G'$ is a claw-free cubic graph with $|V(G')|<n$, hence by induction hypothesis, it admits an $S_{12}$-coloring $g$. Let $g((c,d))=\alpha$, where $\alpha$ is an edge of $S_{12}$. Moreover, let $\beta$ and $\gamma$ be $2$ edges of $S_{12}$ leaving the same end-vertex of $\alpha$ in $S_{12}$. 

Color the edges $(a,c)$ and $(b,d)$ with $\alpha$. Since rings of diamonds and $2$-cycles are $3$-edge-colorable, we can color the edges of $J$ with $\alpha$, $\beta$ and $\gamma$, so that each vertex of $J$ is incident to edges with colors $\alpha$, $\beta$ and $\gamma$. It can be easily checked that this new coloring is an $S_{12}$-coloring of $G$.

If $c=d$, then since $G$ is claw-free, we have that $a$ and $b$ are joined by $2$ parallel edges, hence $J$ is a trivial string contradicting our assumption. 

Thus all strings of diamonds or $2$-cycles of $G$ are trivial. This and (3) of Proposition \ref{prop:ClawfreeCharac} imply that there is a cubic pseudo-graph $H$, such that $G$ can be obtained from $H$ by replacing any vertex of $H$ with a triangle. By Theorem \ref{thm:PseudoCubicS4}, $H$ admits an $S_4$-coloring such that its loops are colored by loops of $S_4$ (see (a) of Theorem \ref{thm:PseudoCubicS4}). Now, observe that $S_{12}$ can be obtained from $S_4$ by replacing any vertex of $S_4$ by a triangle. 

Extend the $S_4$-coloring of $H$ to an $S_{12}$-coloring of $G$ by coloring the edges of new triangles of $G$ by the edges of the corresponding new triangles of $S_{12}$. One can easily see that there is always a way of doing this, which results to an $S_{12}$-coloring of $G$. 

The proof of the theorem is completed.
\end{proof}
\medskip

Taking into account that $S\prec S_{12}$, and $\prec$ is transitive, we have the following corollary of Theorem \ref{thm:ClawFreeS12}:

\begin{corollary} Let $G$ be a claw-free cubic graph. Then $S \prec G$.
\end{corollary}





\begin{thebibliography}{99}

\bibitem{Celmins1984} A. U. Celmins, On cubic graphs that do not have an edge-$3$-colouring, Ph.D. Thesis, Department of Combinatorics and Optimization, University of Waterloo, Waterloo, Canada, 1984.

\bibitem{ChudSeyClawFreeChar} M. Chudnovsky, P. Seymour. The structure of claw-free graphs. In Surveys in combinatorics 2005, London Math. Soc. Lecture Note Ser. 327, pages 153--171. Cambridge Univ. Press, Cambridge, 2005.

\bibitem{Fouquet} J.-L. Fouquet, J.-M. Vanherpe, On Parsimonious Edge-Colouring of Graphs with Maximum Degree Three. Graphs and Combinatorics 29(3): 475-487 (2013)

\bibitem{Fulkerson} D.R. Fulkerson, Blocking and anti-blocking pairs of polyhedra, Math. Programming
1 (1971), 168--194.

\bibitem{Harary} F. Harary, Graph Theory, Addison-Wesley, Reading MA, 1969.

\bibitem{Jaeger} F. Jaeger, Nowhere-zero flow problems, Selected topics in graph theory, 3, Academic
Press, San Diego, CA, 1988, pp. 71--95.

\bibitem{Lovasz} L. Lov\'asz, M.D. Plummer, Matching Theory, Annals of Discrete Math. 29, North Holland, 1986.

\bibitem{PetersenRemark}  V. V. Mkrtchyan, A remark on Petersen coloring conjecture of Jaeger, Australasian Journal of Combinatorics 56, (2013), 145--151 (available at: http://arxiv.org/abs/1201.4472)

\bibitem{sang-il_oum:2011}
S.-il~Oum, Perfect matchings in claw-free cubic graphs, The Electronic Journal of Combinatorics 18(1), 2011.

\bibitem{Preiss1981} M. Preissmann, Sur les colorations des aretes des graphes cubiques, These de $3$-eme cycle, Grenoble (1981).

\bibitem{Schrijver} A. Schrijver, Combinatorial Optimization, Springer, New York, 2003.


\bibitem{Seymour} P. D. Seymour, On multicolourings of cubic graphs, and conjectures of Fulkerson and Tutte. Proc. London Math. Soc. 38 (3), 423--460, 1979.

\bibitem{Steffen1} E. Steffen, Measurements of edge-uncolorability. Discrete Mathematics 280(1-3): 191--214 (2004)

\bibitem{Steffen2} E. Steffen, Classifications and characterizations of snarks. Discrete Mathematics 188(1-3): 183--203 (1998)



\bibitem{West} D. B. West, Introduction to Graph Theory, Prentice-Hall,
Englewood Cliffs, 1996.



\end{thebibliography}
\end{document}